\newcommand{\etol}{e_{\mathrm{tol}}}
\newcommand{\kmax}{k_{\mathrm{max}}}
\newcommand{\tolnewton}{tol_{\mathrm{newton}}}
\newcommand{\tolref}{tol_{\mathrm{ref}}}
\newcommand{\dtref}{\Delta t_{\mathrm{ref}}}
\begin{document}

\title{On the analysis of spectral deferred corrections for differential-algebraic equations of index one
}

\titlerunning{On the analysis of SDC for DAEs of index one}        

\author{Matthias Bolten\orcidlink{0000-0002-8682-7652}         \and
        Lisa Wimmer\orcidlink{0000-0001-8829-0978} 
}


\institute{Matthias Bolten \and Lisa Wimmer\at
              Fakult\"{a}t f\"{u}r Mathematik und Naturwissenschaften, Bergische Universit\"{a}t Wuppertal, Gaußstraße 20, 40297 Wuppertal, Germany \\
              \email{bolten@math.uni-wuppertal.de} \\
              \email{wimmer@math.uni-wuppertal.de}
}

\date{Received: date / Accepted: date}

\maketitle

\begin{abstract}
{In this paper, we present a new spectral deferred corrections (SDC) method to solve semi-explicit differential-algebraic equations (DAEs) with the ability to be parallelized. The new scheme restricts numerical integration to differential equations. In Y. Xia et al. (2007), it was shown that each correction elevates the order of the solution by one. We show that this carries over to the new SDC scheme. The derivation of the method combines the approach of SDC and the idea to enforce the algebraic constraints without numerical integration as shown in the $\varepsilon$-embedding method by E. Hairer and G. Wanner (1996). Keeping the algebraic equations as an implicit condition of the system allows an efficient solve of semi-explicit DAEs with high-accuracy. The proposed scheme is compared with other DAE methods. We demonstrate that the proposed SDC scheme is competitive with Runge-Kutta methods for DAEs in terms of accuracy and its parallelized versions are very efficient compared to their associated sequential SDC variants.}
\keywords{Spectral deferred corrections \and Differential-algebraic equations \and Stiff problems \and Spectral integration}
\subclass{34A09 \and 65F08 \and 65L04 \and 65L05 \and 65L80}
\end{abstract}

\section{Introduction} \label{sec:intro}

Consider the general differential-algebraic equations (DAEs) formulated as implicit differential equations (IDEs)
\begin{equation}
    \bm{0} = \bm{F}(t, \bm{u}(t), \bm{u}'(t)). \label{eq:ide}
\end{equation}
DAEs naturally arise in many fields, e.g. power systems, chemical engineering, etc., as a result of modeling complex models. For example, in power systems, current-voltage relations describe the dynamics of circuit elements and represent differential equations. Kirchhoff's laws prescribe behaviors of flowing currents and voltages at nodes in a circuit. They are enforced through algebraic equations. Reaction-diffusion problems describe the interplay of reaction and diffusion between concentrations \cite{Benabdallah2025}, and algebraic equations are used to impose additional conditions, e.g., on the concentrations densities. Scientists are particularly interested in accurate and efficiently computed numerical solutions for such models. However, when solving DAEs, the mixture of numerical differentiation and integration poses a challenge to numerical solvers \cite{Ascher1998}. Since algebraic equations in semi-explicit DAEs arise as the stiff limit of a corresponding singular perturbation problem, explicit solvers are impractical, as stability constraints enforce small time step sizes. In contrast, implicit solvers are effective for DAEs, since they provide the desired order of accuracy and allow larger time step sizes due to their generous stability conditions. Stiffly accurate methods are preferred because they do not suffer from order reduction in algebraic variables. Common stiffly-accurate solvers for DAEs with high accuracy are Radau IIA methods that belong to the class of direct solvers with minimal order reduction. For index-one problems, order $2M - 1$ is achieved for $M$ collocation nodes in differential and algebraic variables. For higher-index problems, an order reduction in algebraic variables is expected \cite{Hairer_stiff2010}. Unlike diagonally implicit Runge-Kutta (DIRK) methods, for which full order cannot be expected in all variables even for index-one problems, differential variables retain the full order \cite{Brenan1996}. As a downside, the direct solve of the collocation problem is computationally expensive for Radau solvers because of the dense coefficient matrix.

Spectral deferred corrections (SDC) is a high-order iterative method developed by Dutt et al.\ \cite{Dutt2000}. It iteratively solves a series of error equations, and the current solution is corrected by adding the approximated error to the actual solution. In each iteration, the solution is computed via forward substitution where the solve of the system at each node has the same effort as computing a Euler step. For linear problems, SDC can be written as a preconditioned Richardson iteration \cite{Huang2006}. Each correction improves the order of the numerical solution by one, up to the maximum order of the underlying spectral quadrature rule \cite{Shu07}. In the last twenty-five years, a lot of work has been done on SDC. Splitting variants were developed for general ordinary differential equations (ODEs) \cite{Minion2003}, for fast- and slow wave problems \cite{Ruprecht2016}, and general IDEs \cite{Bu2012} to improve computational efficiency. The idea of SDC was expanded to a grid hierarchy, which led to the multilevel SDC. The convergence of the scheme has been examined in \cite{Kremling2021} and has already been applied to evolution problems \cite{Pfister2025}. The parallel framework Parallel-Full-Approximation-Scheme-in-Space-and-Time (PFASST) is built on multilevel SDC that enables \textit{parallelization across the steps} \cite{Emmett2012}. In \cite{Speck2018}, a numerical strategy was introduced to compute coefficients of preconditioners in diagonal form — used to \textit{parallelize SDC across the method} — was introduced, whereas \cite{Caklovic2025} proposes an analytical approach to obtain such coefficients. In \cite{Saupe2025}, steps towards SDC with adaptive step size were taken to make the simulation as computationally efficient as possible. Recent progress has been made in applying SDC to DAEs; Huang et al. \cite{Huang2007} introduced an SDC method for general IDEs \prettyref{eq:ide} that applies the correction procedure within the DAE. The unknown solution can be expressed in terms of its derivative by the fundamental theorem of calculus, which becomes the new unknown of the problem. The solution is then recovered by spectral integration. The authors also propose another SDC variant suited for semi-explicit DAEs where numerical integration is only restricted to differential variables. This is more efficient because of the missing differential relation of the algebraic variables \cite{Huang2007}. In \cite{Qu2016}, a comprehensive analysis of the different SDC formulations is performed. Another possibility of treating the problem is to split the equations into different parts, which allows an implicit-explicit (IMEX) treatment \cite{Bu2012}. As an important application in computational fluid dynamics, different time integration methods based on SDC are considered for the incompressible Navier-Stokes equations, which is a problem of index two \cite{MinionSaye2018}, \cite{Stiller2020}.

In this work, we propose an SDC method that exploits the semi-explicit structure of the DAE resulting in a more efficient method: it combines the traditional SDC approach for differential equations with the idea of enforcing algebraic constraints in each iteration. We theoretically show that the solution of the proposed scheme elevates one order per iteration up to the maximum order, and confirm the result in numerical experiments. As for traditional SDC, it is possible to parallelize the proposed scheme. In comparison with existent DAE solvers, we demonstrate its competitiveness at least when it is parallelized. The work is structured as follows. The traditional SDC method is derived in detail in \prettyref{sec:sdc}. In \prettyref{sec:sdc_dae}, we introduce existing SDC solvers for DAEs and derive a new scheme suited for semi-explicit DAEs that applies the SDC technique to differential equations while keeping algebraic equations as implicit condition. We also prove that the solution of the method gains one order per iteration. In \prettyref{sec:numerical_results}, we confirm the theoretical results and study the performance of the proposed scheme. We conclude the work in \prettyref{sec:conclusion}.

\section{Spectral deferred corrections} \label{sec:sdc}
The method of spectral deferred corrections was first developed by Dutt et al. \cite{Dutt2000}. In this section, we derive the method in detail to provide an understanding of how the scheme is constructed. Consider an initial-value problem for an ODE system for $t \in \mathcal{I} := [t_0, t_1]$ of the form
\begin{equation}
    \bm{u}'(t) = \bm{f}(\bm{u}(t), t), \quad \bm{u}(t_0) = \bm{u}_0 \label{eq:ode}
\end{equation}
with initial condition $\bm{u}_0: \mathcal{I} \to \mathbb{R}^n$, the unknown solution $\bm{u}: \mathcal{I} \to \mathbb{R}^n$, and the right-hand side of the ODE $\bm{f}:\mathbb{R}^n \times \mathcal{I} \to \mathbb{R}^n$. The interval length of $\mathcal{I}$ denotes the time step size as $\Delta t := t_1 - t_0$. Integrating the differential equation in \prettyref{eq:ode} over $\mathcal{I}$ we obtain Picard's integral formulation given by
\begin{equation}
    \bm{u}(t) = \bm{u}_0 + \int_{t_0}^t \bm{f}(\bm{u}(s), s)\,\mathrm{ds}. \label{eq:picard_integral}
\end{equation}
Let $t_0 \leq \tau_1 < .. <\tau_M \leq t_1$ be a set of $M$ collocation nodes with substeps $\Delta \tau_m := \tau_m - \tau_{m - 1}$ for $m = 2,\dots,M$ and $\Delta \tau_1 := \tau_1 - t_0$. In the following, Radau IIA nodes with $t_0 < \tau_1$ and $\tau_M = t_1$ are used. The integral equation is approximated by a spectral quadrature rule at each node $\tau_m$
\begin{equation}
    \bm{u}(\tau_m) = \bm{u}(t_0) + \sum_{j = 1}^M q_{m, j} \bm{f}(\bm{u}(\tau_j), \tau_j) \label{eq:collocation_node_wise}
\end{equation}
with quadrature weights
\begin{equation}
    q_{m, j} = \int_{t_0}^{\tau_m} \ell_j(s)\,\mathrm{ds}
\end{equation}
ensuring high accuracy. The function $\ell_j (t)$ denotes the $j$-th Lagrange polynomial
\begin{equation}
    \ell_j (t) = \prod_{i=1, j\neq i}^{M}\dfrac{t-\tau_{i}}{\tau_{i}-\tau_{j}}.
\end{equation}
In general, using the spectral quadrature rule with $M$ Radau IIA nodes, the integral over the entire interval $\mathcal{I}$ can be computed with error $\mathcal{O}(\Delta t^{2M -1})$. However, the integrals in Picard's formulation are computed with error $\mathcal{O}(\Delta t^{M + 1})$, since the integrals are only defined on the subintervals $[t_0, \tau_j]$ for $j = 1,\dots,M$.

Equations \prettyref{eq:collocation_node_wise} are equivalent to the stages in a general implicit Runge-Kutta (RK) method represented by the Butcher tableau
\begin{equation*}
    \renewcommand\arraystretch{1.6}
    \begin{array}{c@{\hskip 0.5em}|@{\hskip 0.5em}ccc}
        c_1 & q_{1, 1} & \cdots & q_{1,M}\\
        \vdots & \vdots & & \vdots \\
        c_M & q_{M, 1}&  \cdots & q_{M, M}\\[0.5ex]
        \hline
        & b_1 & \cdots & b_M 
    \end{array}
\end{equation*}
with weights $b_j$, $j=1,..,M$ and nodes $c_m \in [0, 1]$, $m=1,..,M$ where $\tau_m = t_0 + c_m \Delta t$.
Across all collocation nodes, the collocation problem is then given by
\begin{equation}
    \bm{u} = \bm{1}_M \otimes \bm{u}_0 + \Delta t \bm{Q} \otimes \bm{I}_n \bm{f} (\bm{u}) \label{eq:collocation_problem}
\end{equation}
with $\bm{1}_M := (1, \dots ,1)^\top \in \mathbb{R}^M$, the vector of the unknown function at collocation nodes $\bm{u} \allowbreak :=\allowbreak (\bm{u}(\tau_1),\allowbreak \dots,\allowbreak \bm{u}(\tau_M))^\top \in \mathbb{R}^{Mn}$, and the vector of the evaluations of the corresponding right-hand side $\bm{f} (\bm{u}) := \allowbreak (\bm{f}(\bm{u}(\tau_1), \tau_1),\allowbreak \dots,\allowbreak \bm{f}(\bm{u}(\tau_M), \tau_M))^\top \allowbreak \in \allowbreak \mathbb{R}^{Mn}$. The matrix $\bm{Q} = \{q_{m, j}\}_{m, j=1,\dots,M}$ denotes the spectral integration matrix and $\bm{I}_n$ is the identity matrix of size $n$. If the last collocation node does not equal the end of the time step, i.e. $\tau_M < t_1$, the solution at the next time $t_1$ is obtained by performing the collocation update
\begin{equation}
    \bm{u}(t_1) = \bm{u}_0 + \sum_{j = 1}^M b_j \bm{f}(\bm{u}(\tau_j), \tau_j).
\end{equation}

The implicit system \prettyref{eq:collocation_problem} defines a system of $Mn$ equations with $Mn$ unknowns, and therefore the computation of a solution is an expensive task, especially if $n$ is large. This is the case if the right-hand side stems from the spatial discretization of a partial differential equation, or \prettyref{eq:ode} defines a real-world application, for example.

Instead of directly solving the system, the SDC method iteratively solves a series of correction equations, and an improved solution for the next iteration is obtained by correcting the solution of the current approximation. The values at the collocation nodes are computed by forward substitution, so that the work at each node is similar to that of a Euler step. This is the original idea in the derivation of the method as in \cite{Dutt2000}.

Assume a provisional solution $\bm{u}^0(t)$ that is computed using a low-order time-stepping method. Let $\bm{u}^k(t)$ be an approximation of $\bm{u}(t)$ for some index $k \geq 0$, and the error to measure the accuracy of the approximation is defined as $\bm{\delta}^k (t):= \bm{u}(t) - \bm{u}^k(t)$ with $\bm{\delta}^k (t_0) = \bm{0}$ and $\bm{u}^k(t_0) = \bm{u}_0$. The unknown solution is replaced by the error, and Picard's formulation \prettyref{eq:picard_integral} becomes
\begin{equation}
    \bm{u}^k(t) + \bm{\delta}^k (t) = \bm{u}_0 + \int_{t_0}^t \bm{f}(\bm{u}^k(s) + \bm{\delta}^k (s), s)\,\mathrm{ds}. \label{eq:picard_integral_approximation}
\end{equation}
An equation for the error is received by
\begin{equation}
    \bm{\delta}^k (t) = \int_{t_0}^t \bm{f}(\bm{u}^k(s) + \bm{\delta}^k (s), s) - \bm{f}(\bm{u}^k(s), s)\,\mathrm{ds} + \bm{r}^k (t) \label{eq:error_equation}
\end{equation}
with residual function
\begin{equation}
    \bm{r}^k (t) = \bm{u}_0 + \int_{t_0}^t \bm{f}(\bm{u}^k(s), s)\,\mathrm{ds} - \bm{u}^k(t)
\end{equation}
that is used to monitor the convergence during the iteration process. Evaluating equation \prettyref{eq:error_equation} at $t = \tau_m$ and $t = t_0$, and taking the difference gives
\begin{equation}
    \begin{split}
        &\bm{\delta}^k (\tau_m) - \bm{\delta}^k (t_0) \\
        &\qquad= \int_{t_0}^{\tau_m} \bm{f}(\bm{u}^k(s) + \bm{\delta}^k (s), s) - \bm{f}(\bm{u}^k(s), s)\,\mathrm{ds} + \bm{r}^k (\tau_m) - \bm{r}^k (t_0).
        \label{eq:difference_errors}
    \end{split}
\end{equation}
For the discretization of \prettyref{eq:difference_errors}, the difference of the residual functions is the residual at $\tau_m$ itself, i.e.,
\begin{equation}
    \bm{r}^k (\tau_m) - \bm{r}^k (t_0) = \bm{u}_0 + \int_{t_0}^{\tau_m} \bm{f}(\bm{u}^k (s), s)\,\mathrm{ds} - \bm{u}^k (\tau_m) = \bm{r}^k (\tau_m). \label{eq:difference_residuals}
\end{equation}
If the residual $\bm{r}^{\tilde{k}} (\tau_m)$ is zero for any index $\tilde{k}$, the collocation problem is solved. In order to numerically compute the residual, the spectral quadrature rule is used to discretize the integral. Approximating the residual in \prettyref{eq:difference_residuals} by
\begin{equation}
    \bm{u}_0 + \sum_{j = 1}^M q_{m, j} \bm{f}(\bm{u}^k (\tau_j), \tau_j) - \bm{u}^k (\tau_m)
\end{equation}
and inserting it into \prettyref{eq:difference_errors}, the modified equation is
\begin{equation}
    \begin{split}
        \bm{u}^k (\tau_m) + \bm{\delta}^k (\tau_m) &= \bm{u}_0 + \int_{t_0}^{\tau_m} \bm{f}(\bm{u}^k(s) + \bm{\delta}^k (s), s) - \bm{f}(\bm{u}^k(s), s)\,\mathrm{ds} \\
        &\quad\,+ \sum_{j = 1}^M q_{m, j} \bm{f}(\bm{u}^k (\tau_j), \tau_j),
    \end{split} \label{eq:modified_error}
\end{equation}
where the error $\bm{\delta}^k (t_0)$ is zero. The integral in \prettyref{eq:modified_error} is simply discretized using either the left rectangle rule (as implicit Euler steps) by
\begin{equation}
    \begin{split}
        &\int_{t_0}^{\tau_m} \bm{f}(\bm{u}^k(s) + \bm{\delta}^k (s), s) - \bm{f}(\bm{u}^k(s), s)\,\mathrm{ds} \\
        & \qquad \qquad \qquad \approx \sum_{j = 1}^m \Delta \tau_j \left(\bm{f}(\bm{u}^k(\tau_j) + \bm{\delta}^k (\tau_j), \tau_j) - \bm{f}(\bm{u}^k(\tau_j), \tau_j)\right),
    \end{split} \label{eq:integral_left_rectangle_rule}
\end{equation}
or the right rectangle rule (as explicit Euler steps) by
\begin{equation}
    \begin{split}
        &\int_{t_0}^{\tau_m} \bm{f}(\bm{u}^k(s) + \bm{\delta}^k (s), s) - \bm{f}(\bm{u}^k(s), s)\,\mathrm{ds} \\
        & \qquad \qquad \qquad \approx \sum_{j = 1}^{m - 1} \Delta \tau_{j + 1} \left(\bm{f}(\bm{u}^k(\tau_j) + \bm{\delta}^k (\tau_j), \tau_j) - \bm{f}(\bm{u}^k(\tau_j), \tau_j)\right),
    \end{split} \label{eq:integral_right_rectangle_rule}
\end{equation}
where both quadrature rules are first-order. Let $\bm{u}^k_m \approx \bm{u}^k (\tau_m)$ be discrete approximations of the exact values. The solution is corrected by adding the error to the actual approximation, i.e. $\bm{u}^{k + 1}_m = \bm{u}^k_m + \bm{\delta}^k (\tau_m)$. Collecting the update equation \prettyref{eq:modified_error} with implicit Euler as the base integration method \prettyref{eq:integral_left_rectangle_rule}, the implicit SDC scheme suited for stiff problems reads
\begin{equation}
    \bm{u}^{k + 1}_m  = \bm{u}_0 + \sum_{j = 1}^m \Delta \tau_j \left(\bm{f}(\bm{u}^{k + 1}_j, \tau_j) - \bm{f}(\bm{u}^k_j, \tau_j)\right) + \sum_{j = 1}^M q_{m, j} \bm{f}(\bm{u}^k_j, \tau_j), \label{eq:impl_sdc}
\end{equation}
and the explicit SDC scheme using the explicit Euler as base integrator \prettyref{eq:integral_right_rectangle_rule} in \prettyref{eq:modified_error} is formulated by
\begin{equation}
    \bm{u}^{k + 1}_m  = \bm{u}_0 + \sum_{j = 1}^{m - 1} \Delta \tau_{j + 1} \left(\bm{f}(\bm{u}^{k + 1}_j, \tau_j) - \bm{f}(\bm{u}^k_j, \tau_j)\right) + \sum_{j = 1}^M q_{m, j} \bm{f}(\bm{u}^k_j, \tau_j), \label{eq:expl_sdc}
\end{equation}
rather suited for non-stiff problems.

The implicit scheme \prettyref{eq:impl_sdc} requires the solution of an implicit system at each collocation node. Since all $m - 1$ values $\bm{u}^{k + 1}_j$ are already computed, the solution of the system at node $\tau_m$ requires the same work as for one implicit Euler step. The same argument carries to the explicit scheme \prettyref{eq:expl_sdc}: Here, only the evaluation of the right-hand side is required to update the values which is just as cheap as an explicit Euler step.

Both SDC schemes have the general form
\begin{equation}
    \bm{u}^{k + 1}_m  = \bm{u}_0 + \sum_{j = 1}^m \tilde{q}_{m, j} \left(\bm{f}(\bm{u}^{k + 1}_j, \tau_j) - \bm{f}(\bm{u}^k_j, \tau_j)\right) + \sum_{j = 1}^M q_{m, j} \bm{f}(\bm{u}^k_j, \tau_j), \label{eq:general_sdc}
\end{equation}
where $\tilde{q}_{m, j}$ are the coefficients of a lower triangular matrix $\bm{Q}_\Delta$ associated with a low-order quadrature rule. In the community, it is well-known that the general SDC scheme using
\begin{equation}
    \bm{Q}_\Delta^{\texttt{IE}} = \begin{pmatrix}
        \Delta \tau_1 & 0 & \dots & 0\\
        \Delta \tau_1 & \Delta \tau_2 & \ddots & \vdots\\
        \vdots & \vdots & \ddots & 0 \\
        \Delta \tau_1 & \Delta \tau_2 & \dots & \Delta \tau_M
    \end{pmatrix} \quad \text{and} \quad
    \bm{Q}_\Delta^{\texttt{EE}} = \begin{pmatrix}
        0 & \dots & \dots & 0\\
        \Delta \tau_2 & 0 & & \vdots\\
        \vdots & \ddots & \ddots & \vdots\\
        \Delta \tau_2 & \dots & \Delta \tau_M & 0
    \end{pmatrix} \label{eq:QI_IE_EE}
\end{equation}
refers to the implicit scheme \prettyref{eq:impl_sdc} and the explicit scheme \prettyref{eq:expl_sdc}, respectively.

The traditional SDC method uses a low-order method to compute a provisional solution at each collocation node to obtain provisional values for $\bm{u}^0$. In the following, a provisional solution is used that is obtained instead by spreading the initial condition to each node $\tau_m$, i.e., $\bm{u}^0 \allowbreak := \allowbreak (\bm{u}_0,\allowbreak \dots,\allowbreak \bm{u}_0)^\top \in \mathbb{R}^{Mn}$.

\subsection{SDC as fixed-point method} \label{sec:sdc_iter_method}
The collocation problem \prettyref{eq:collocation_problem} as a system of equations can be solved by an iterative method. Equivalently, the system has the form
\begin{equation}
    \bm{C}(\bm{u}) = \bm{1}_M \otimes \bm{u}_0 \label{eq:collocation_problem_reformulated}
\end{equation}
with
\begin{equation}
    \quad\bm{C}(\bm{u}) := (\bm{I}_M \otimes \bm{I}_n - \Delta t \bm{Q} \otimes \bm{I}_n \bm{f}) (\bm{u}). \label{eq:operator_collocation_problem}
\end{equation}
If we apply an preconditioned iterative method of the form
\begin{equation}
    \bm{P}(\bm{u}^{k + 1}) = \bm{P}(\bm{u}^k) + (\bm{1}_M \otimes \bm{u}_0 - \bm{C}(\bm{u}^k))
\end{equation}
with preconditioner
\begin{equation}
    \quad\bm{P}(\bm{u}) := (\bm{I}_M \otimes \bm{I}_n - \Delta t \bm{Q}_\Delta \otimes \bm{I}_n \bm{f}) (\bm{u})
\end{equation}
we obtain the preconditioned Richardson iteration
\begin{equation}
    (\bm{I}_M \otimes \bm{I}_n - \Delta t \bm{Q}_\Delta \otimes \bm{I}_n \bm{f}) (\bm{u}^{k + 1}) = \bm{1}_M \otimes \bm{u}_0 + (\Delta t (\bm{Q} - \bm{Q}_\Delta) \otimes \bm{I}_n \bm{f}) (\bm{u}^k) \label{eq:sdc_iterative_scheme}
\end{equation}
with iteration matrix
\begin{equation}
    (\bm{I}_M \otimes \bm{I}_n - \Delta t \bm{Q}_\Delta \otimes \bm{I}_n \bm{f})^{-1}\Delta t (\bm{Q} - \bm{Q}_\Delta) \otimes \bm{I}_n \bm{f} (\bm{u}) \label{eq:iteration_matrix}
\end{equation}
denoted by $\bm{K} (\bm{u})$. The row-wise formulation of the scheme \prettyref{eq:sdc_iterative_scheme} corresponds to the implicit SDC method \prettyref{eq:impl_sdc}, or the explicit SDC scheme \prettyref{eq:expl_sdc}, depending on the choice of $\bm{Q}_\Delta$.

In numerical experiments, we focus on a certain set of $\mathbf{Q}_\Delta$ matrices including the low-order quadrature rules $\bm{Q}_\Delta^{\texttt{IE}}$ and $\bm{Q}_\Delta^{\texttt{EE}}$. The matrix $\bm{Q}_\Delta^{\texttt{Picard}} = \bm{0}$ yields an explicit scheme and equals the Picard iteration known to be suited for non-stiff problems. We have seen that the SDC method is equivalent to a preconditioned Richardson iteration. In the last ten years, some work focused on the construction of $\bm{Q}_\Delta$ matrices that minimize the spectral radius of the iteration matrix \prettyref{eq:iteration_matrix}. The research work started with the observation that the original SDC methods \prettyref{eq:impl_sdc} and \prettyref{eq:expl_sdc} converge slowly. "St. Martin's LU-trick" significantly improves the convergence rate of the SDC method, especially in the stiff and non-stiff limits \cite{Weiser2015}. The resulting preconditioner leads to zero spectral radius, and thus eliminates the observed convergence issues. The construction of $\bm{Q}_\Delta^{\texttt{LU}}$ is based on minimizing the spectral radius of the iteration matrix in the limits. The matrix is defined by $\bm{Q}_\Delta^{\texttt{LU}}=\bm{U}^T$ where $\bm{U}$ is computed from the LU decomposition $\bm{Q}^T=\bm{LU}$.

Recently, an analytical approach to compute coefficients for diagonal matrices $\bm{Q}_\Delta^{\texttt{MIN-SR-S}}$ and $\bm{Q}_\Delta^{\texttt{MIN-SR-NS}}$ that allows SDC to be parallelized across the method is presented \cite{Caklovic2025}. The approach aims to minimize the spectral radius by computing the coefficients to get a nilpotent iteration matrix. The \texttt{MIN-SR-FLEX} strategy that was introduced in \cite{Caklovic2025} results in a non-stationary SDC method, i.e., the corresponding matrix $\bm{Q}_\Delta$ changes after each iteration, is not considered here.

\section{Spectral deferred corrections for differential-algebraic equations} \label{sec:sdc_dae}
The idea of the SDC method can be extended to the class of DAEs.
Consider the initial-value problem for a semi-explicit DAE for $t \in \mathcal{I}$ of the form
\begin{equation}
    \bm{y}' (t) = \bm{f} (\bm{y}(t), \bm{z}(t), t), \quad \bm{0} = \bm{g} (\bm{y}(t), \bm{z}(t), t), \quad (\bm{y}(t_0), \bm{z}(t_0)) = (\bm{y}_0, \bm{z}_0) \label{eq:semiexplicit_dae}
\end{equation}
with initial conditions $\bm{y}_0: \mathcal{I} \to \mathbb{R}^{n_d}$, $\bm{z}_0: \mathcal{I} \to \mathbb{R}^{n_a}$, where $\bm{y}: \mathcal{I} \to \mathbb{R}^{n_d}$ is the differential variable, and $\bm{z}: \mathcal{I} \to \mathbb{R}^{n_a}$ is the algebraic variable. The function $\bm{f}:\mathbb{R}^{n_d} \times \mathbb{R}^{n_a} \times \mathcal{I} \to \mathbb{R}^{n_d}$ denotes the right-hand side of the differential equations, and $\bm{g}:\mathbb{R}^{n_d} \times \mathbb{R}^{n_a} \times \mathcal{I} \to \mathbb{R}^{n_a}$ is the right-hand side of the algebraic constraints. We set $n_d$ as the number of differential equations and differential variables, and $n_a$ as the number of algebraic constraints and algebraic variables with $n_d + n_a = n$ the size of the entire system \prettyref{eq:semiexplicit_dae}.
Let $\bm{J}_{\bm{f}}$ be the Jacobian of $\bm{f}$ with
\begin{equation}
    \bm{J}_{\bm{f}} = \begin{pmatrix}
        \frac{\partial \bm{f}_1}{\partial y_1} & \cdots & \frac{\partial \bm{f}_1}{\partial y_{n_d}} & \frac{\partial \bm{f}_1}{\partial z_1} & \cdots & \frac{\partial \bm{f}_1}{\partial z_{n_a}} \\
        \vdots & & \vdots & \vdots & &\vdots \\
        \frac{\partial \bm{f}_{n_d}}{\partial y_1} & \cdots & \frac{\partial \bm{f}_{n_d}}{\partial y_{n_d}} & \frac{\partial \bm{f}_{n_d}}{\partial z_1} & \cdots & \frac{\partial \bm{f}_{n_d}}{\partial z_{n_a}}
    \end{pmatrix} = \begin{pmatrix}
            \bm{J}_{\bm{f}}^{\bm{y}} & & \bm{J}_{\bm{f}}^{\bm{z}}
        \end{pmatrix},
\end{equation}
where $\bm{J}_{\bm{f}}^{\bm{y}} $ and $\bm{J}_{\bm{f}}^{\bm{z}}$ are Jacobian parts with respect to $\bm{y}$ and $\bm{z}$. The components of the right-hand side $\bm{f}$ are defined by $\bm{f}_i = \bm{f}_i (\bm{y} (t), \bm{z} (t), t)$ with $\bm{y} (t) = (y_1 (t), \dots, y_{n_d} (t))$ and $\bm{z} (t) = (z_1 (t), \dots, z_{n_a} (t))$. The Jacobian $\bm{J}_{\bm{g}}$ of $\bm{g}$ and its parts $\bm{J}_{\bm{g}}^{\bm{y}}$, $\bm{J}_{\bm{g}}^{\bm{z}}$ are defined in the same way.

In the remainder of this work, the system \prettyref{eq:semiexplicit_dae} is assumed to have index one, i.e., $\bm{J}_{\bm{g}}^{\bm{z}}$ is nonsingular and bounded. If $\bm{f}$ and $\bm{g}$ are at least $C^2$ functions, then the algebraic constraints can be solved to $\bm{z}$ in terms of a function $\bm{G}: \mathbb{R}^{n_d} \times \mathcal{I} \to \mathbb{R}^{n_a}$ implied by the implicit function theorem
\begin{equation}
    \bm{0} = \bm{g} (\bm{y}(t), \bm{z}(t), t) \quad \Leftrightarrow \quad \bm{z} (t) = \bm{G} (\bm{y}(t), t) \label{eq:ift}
\end{equation}
for fixed time $t$. Inserting the function $\bm{G}$ into the algebraic constraints, differentiating the left-hand side  via chain rule and solving to $\bm{J}_{\bm{G}}$ the Jacobian of $\bm{G}$ has the form
\begin{equation}
    \bm{J}_{\bm{G}} (\bm{y} (t), t) = -\left[\left(\bm{J}_{\bm{g}}^{\bm{z}}\right)^{-1} \bm{J}_{\bm{g}}^{\bm{y}} \right] (\bm{y}, \bm{G}(\bm{y} (t), t), t). \label{eq:jacobian_G_ift}
\end{equation}
If the arguments of the Jacobian matrices are clear, they will be omitted from now on for better readability.
 
Under the assumptions of the implicit function theorem, the semi-explicit index-one DAE \prettyref{eq:semiexplicit_dae} admits a locally unique solution expressed through $\bm{G}$. This representation allows us to reformulate the DAE as an IDE, for which we now introduce the SDC scheme, first proposed by J. Huang et al. \cite{Huang2007}. Consider the general IDE \prettyref{eq:ide}. Since a Picard formulation is not available or at least difficult to extract, the function $\bm{u}(t)$ is represented via the fundamental theorem of calculus as
\begin{equation*}
    \bm{u} (t) = \bm{u}_0 + \int_{t_0}^t \bm{U}(s)\,\mathrm{ds},
\end{equation*}
where $\bm{U} (t) := \bm{u}'(t)$ denotes the derivative of $\bm{u}$, and the IDE becomes
\begin{equation*}
    \bm{0} = \bm{F}\left(t, \bm{u}_0 + \int_{t_0}^t \bm{U}(s)\,\mathrm{ds}, \bm{U} (t)\right).
\end{equation*}
Then, the proposed fully-integrating SDC method is formulated as
\begin{equation}
    \bm{0} = \bm{F}\left(\tau_m, \bm{u}_0 + \sum_{j = 1}^M (q_{m, j} - \tilde{q}_{m, j}) \bm{U}^k_j + \sum_{j = 1}^m \tilde{q}_{m, j} \bm{U}^{k + 1}_j, \bm{U}^{k + 1}_m\right) \label{eq:fi_sdc}
\end{equation}
with approximations to the derivative at node $\tau_m$, i.e., $\bm{U}^k_m \approx \bm{u}'(\tau_m)$. In this work, the scheme \prettyref{eq:fi_sdc} is named \texttt{FI-SDC}. The implicit system at each node is solved using the Newton-Krylov method. After each iteration $k$, the computation of
\begin{equation*}
    \bm{u}^{k + 1}_m = \bm{u}_0 + \sum_{j = 1}^M q_{m, j} \bm{U}^{k + 1}_j
\end{equation*}
recovers the numerical solution $\bm{u}^{k + 1}_m$ by numerical integration. For semi-explicit DAEs, J. Huang et al. proposed a semi-integrating SDC variant \cite{Huang2007}
\begin{subequations} \label{eq:si_sdc}
    \begin{align}
        \bm{Y}^{k + 1}_m &= \bm{f}\left(\bm{y}_0 + \sum_{j = 1}^M (q_{m, j} - \tilde{q}_{m, j}) \bm{Y}^k_j + \sum_{j = 1}^m \tilde{q}_{m, j} \bm{Y}^{k + 1}_j, \bm{z}^{k + 1}_m\right),\\
        \bm{0} &= \bm{g}\left(\bm{y}_0 + \sum_{j = 1}^M (q_{m, j} - \tilde{q}_{m, j}) \bm{Y}^k_j + \sum_{j = 1}^m \tilde{q}_{m, j} \bm{Y}^{k + 1}_j, \bm{z}^{k + 1}_m\right),
    \end{align}
\end{subequations}
where $\bm{Y}^k_m$ are approximations to the derivative of the differential variable at node $\tau_m$, i.e., $\bm{Y}^k_m \approx \bm{y}'(\tau_m)$. The vectors $\bm{z}_m^k$ are approximations to the algebraic variable with $\bm{z}_m^k \approx \bm{z} (\tau_m)$. We denote the scheme \prettyref{eq:si_sdc} as \texttt{SI-SDC} where numerical integration is restricted to differential variables. For more details, the interested reader is referred to \cite{Huang2007}. 

In order to derive an SDC method for semi-explicit DAEs, we choose a different approach: The SDC method \prettyref{eq:general_sdc} is applied to differential equations subject to algebraic constraints. Then, the constrained SDC method denoted as \texttt{SDC-C} reads
\begin{subequations} \label{eq:constrained_sdc}
    \begin{align}
        \bm{y}^{k + 1}_m  &= \bm{y}_0 + \sum_{j = 1}^m \tilde{q}_{m, j} \left(\bm{f}(\bm{y}^{k + 1}_j, \bm{z}^{k + 1}_j, \tau_j) - \bm{f}(\bm{y}^k_j, \bm{z}^k_j, \tau_j)\right) \notag \\
        &\quad\,+ \sum_{j = 1}^M q_{m, j} \bm{f}(\bm{y}^k_j, \bm{z}^k_j, \tau_j), \label{eq:sdc_c_eq1} \\
        \bm{0} &= \bm{g} (\bm{y}^{k + 1}_m, \bm{z}^{k + 1}_m, \tau_m) \label{eq:sdc_c_eq2}
    \end{align}
\end{subequations}
with approximations $\bm{y}^k_m \approx \bm{y} (\tau_m)$ and $\bm{z}^k_m \approx \bm{z} (\tau_m)$. The derivation of the \texttt{SDC-C} scheme is based on the constrained Picard formulation
\begin{equation}
    \bm{y}(t) = \bm{y}_0 + \int_{t_0}^t \bm{f}(\bm{y}(s), \bm{z}(s), s)\,\mathrm{ds} \quad \text{s. t.} \quad \bm{0} = \bm{g} (\bm{y}(t), \bm{z} (t), t). \label{eq:constrained_picard_integral}
\end{equation}
Note that in the linear case the identity matrix $\bm{I}_n$ in the iteration matrix $\bm{K}$ in \prettyref{eq:iteration_matrix} is replaced by a matrix $\bm{I}_{n_d,0}$ whose first $n_d$ diagonal entries are equal to one and the remaining $n_a$ entries are zero.

\begin{remark}[Algebraic constraints as stiff limit] \label{rem:alg_stiff_limit}
    Let us study the scalar differential equation
    \begin{equation}
        \varepsilon \bar{z}'(t) = \bar{z}(t), \quad \bar{z}(t_0) = \bar{z}_0, \label{eq:scalar_equation}
    \end{equation}
    for an initial condition $\bar{z}_0 \in \mathbb{R}$, unknown solution $\bar{z} (t) \in \mathbb{R}$, and perturbation parameter $0 < \varepsilon \ll 1$ where $\varepsilon$ introduces stiffness to the problem. We apply an implicit SDC scheme that reads across all collocation nodes
    \begin{equation*}
        (\varepsilon \bm{I}_M - \Delta t \bm{Q}_\Delta) \bm{\bar{z}}^{k + 1} = \varepsilon \bm{1}_M \otimes \bar{z}_0 + \Delta t (\bm{Q} - \bm{Q}_\Delta) \bm{\bar{z}}^k
    \end{equation*}
    with solution
    \begin{equation*}
        \bm{\bar{z}}^{k + 1} = (\varepsilon \bm{I}_M - \Delta t \bm{Q}_\Delta)^{-1}(\varepsilon \bm{1}_M \otimes \bar{z}_0 + \Delta t (\bm{Q} - \bm{Q}_\Delta) \bm{\bar{z}}^k),
    \end{equation*}
    where $\bm{\bar{z}}^k \allowbreak :=\allowbreak (\bar{z}^k_1,\allowbreak \dots,\allowbreak \bar{z}^k_M)^\top \in \mathbb{R}^{M}$ with $\bar{z}^k_M \approx \bar{z}(\tau_m)$. Setting $\varepsilon = 0$ in \prettyref{eq:scalar_equation} that represents the stiff limit leads to a purely algebraic equation, and the solution of the modified scheme for the algebraic equation is
    \begin{equation*}
        \bm{\bar{z}}^{k + 1} = (\bm{I}_M - \bm{Q}_\Delta^{-1} \bm{Q}) \bm{\bar{z}}^k.
    \end{equation*}
    This highlights two key aspects: First, in a semi-explicit DAE \prettyref{eq:semiexplicit_dae} the algebraic constraints correspond to the stiff limit of the problem that requires their implicit treatment. The matrix $(\bm{I}_M - \bm{Q}_\Delta^{-1} \bm{Q})$ is well-known in the community as representing the stiff limit of the iteration matrix, and certain choices of $\bm{Q}_\Delta$ matrices are specifically designed for this case (see the introduction of the matrices at the end of \prettyref{sec:sdc_iter_method}). Second, since the purely algebraic equation does not involve any time derivatives,
    numerical integration is not required for its discretization. As a result, applying \texttt{FI-SDC} to a semi-explicit DAE, i.e., one that includes the numerical integration of the algebraic equations, yields an unnecessarily inefficient and inaccurate approach. Moreover, the algebraic equations are not guaranteed to be satisfied, as the method effectively solves a collocation problem rather than enforcing the constraints directly. This improper treatment may also cause the method to become unstable more quickly. While the severity of these issues is problem-dependent, the numerical integration of the algebraic components is generally not recommended in this context.
\end{remark}

For the \texttt{FI-SDC} and \texttt{SI-SDC} methods introduced above, the following remark shows that the algebraic equations only converge to zero instead of being zero in every iteration.

\begin{remark}[Convergence in algebraic constraints] \label{rem:convergence_alg_const}
    For a scalar problem, consider the \texttt{SI-SDC} scheme \prettyref{eq:si_sdc} across all collocation nodes
    \begin{equation} \label{eq:si_sdc_all_nodes}
        \begin{split}
            \bm{Y}^{k + 1} &= \bm{f}\left(\bm{y}_0 + \Delta t (\bm{Q} - \bm{Q}_\Delta)\bm{Y}^k + \Delta t \bm{Q}_\Delta \bm{Y}^{k + 1}, \bm{z}^{k + 1}, \bm{\tau}\right), \\
            \bm{0} &= \bm{g}\left(\bm{y}_0 + \Delta t (\bm{Q} - \bm{Q}_\Delta)\bm{Y}^k + \Delta t \bm{Q}_\Delta \bm{Y}^{k + 1}, \bm{z}^{k + 1}, \bm{\tau}\right),
        \end{split}
    \end{equation}
    with approximation vectors $\bm{Y}^k := (\bm{Y}^k_1,..,\bm{Y}^k_M)$ and $\bm{z}^k := (\bm{z}^k_1,..,\bm{z}^k_M)$, and the vector of collocation nodes $\bm{\tau} := (\tau_1, .., \tau_M)$. Assume that the solution of \prettyref{eq:si_sdc_all_nodes} is converged in iteration $\tilde{k} \le \kmax$, i.e., $\bm{Y}^{\tilde{k}} \approx \bm{Y}^{\tilde{k} + 1}$. We define converged solutions by $\bm{Y}$ and $\bm{z}$. Then, the solutions solve the corresponding collocation problem. Especially, the algebraic constraints in the \texttt{SI-SDC} scheme become
    \begin{equation}
        \bm{0} = \bm{g}\left(\bm{y}_0 + \Delta t (\bm{Q} - \bm{Q}_\Delta)\bm{Y} + \Delta t \bm{Q}_\Delta \bm{Y}, \bm{z}, \bm{\tau}) = \bm{g}(\bm{y}_0 + \Delta t \bm{Q} \bm{Y}, \bm{z}, \bm{\tau}\right). \label{eq:coll_problem_g}
    \end{equation}
    The solution $\bm{y}$ is recovered by numerical integration
    \begin{equation*}
        \bm{y} = \bm{y}_0 + \Delta t \bm{Q} \bm{Y}. 
    \end{equation*}
    Using this in \prettyref{eq:coll_problem_g}, we obtain
    \begin{equation}
        \bm{0} = \bm{g}\left(\bm{y}_0 + \Delta t \bm{Q} \frac{1}{\Delta t} \bm{Q}^{-1}(\bm{y} - \bm{y}_0), \bm{z}, \bm{\tau}\right) = \bm{g}(\bm{y}, \bm{z}, \bm{\tau}), \label{eq:coll_problem_g_2}
    \end{equation}
    and the same holds for the \texttt{FI-SDC} scheme. Equation \prettyref{eq:coll_problem_g_2} shows that algebraic constraints might not be satisfied in each iteration, but only if the problem is converged. Thus, the formulations of the \texttt{SI-SDC} and \texttt{FI-SDC} schemes do not guarantee the preservation of the conditions during the iteration process. In contrast, the \texttt{SDC-C} method ensures that the algebraic equations are satisfied in each iteration by keeping $\bm{0} = \bm{g}(\bm{y}^{k+1}, \bm{z}^{k+1}, \bm{\tau})$ as an implicit condition of the system, see \prettyref{eq:sdc_c_eq2}.
\end{remark}

Algebraic constraints represent the stiff limit in \prettyref{eq:semiexplicit_dae} as a stiff component. Consider the problem where the right-hand side of the differential equations can be split into parts in the form
\begin{equation}
    \begin{split}
        \bm{y}' (t) &= \bm{f}(\bm{y}(t), \bm{z}(t), t) = \bm{f}_\mathrm{im}(\bm{y}(t), \bm{z}(t), t) + \bm{f}_\mathrm{ex}(\bm{y}(t), \bm{z}(t), t), \\\bm{0} &= \bm{g}(\bm{y}(t), \bm{z}(t), t),
    \end{split} \label{eq:semi_explicit_dae_imex}
\end{equation}
where $\bm{f}_\mathrm{im}$ denotes the stiff term that is treated implicitly and $\bm{f}_\mathrm{ex}$ is the non-stiff term that is treated explicitly. Integrating the differential equations over $\mathcal{I}$, the constrained Picard integral equation has the form
\begin{equation}
    \begin{split}
        \bm{y}(t) &= \bm{y}_0 + \int_{t_0}^t \bm{f}_\mathrm{im} (\bm{y}(s), \bm{z}(s), s) + \bm{f}_\mathrm{ex} (\bm{y}(s), \bm{z}(s), s)\,\mathrm{ds}, \\
    \text{s. t.} \quad \bm{0} &= \bm{g} (\bm{y}(t), \bm{z} (t), t).
    \end{split} \label{eq:imex_constrained_picard_integral}
\end{equation}
The \texttt{SDC-C} method can be extended to problems of the form \prettyref{eq:semi_explicit_dae_imex} by treating the differential equations in an IMEX way. The resulting scheme, denoted \texttt{IMEX-SDC-C}, is formulated as
\begin{subequations}
    \begin{align}
        \bm{y}^{k + 1}_m  &= \bm{y}_0 + \sum_{j = 1}^m \tilde{q}_{m, j}^\mathrm{im} \left(\bm{f}_\mathrm{im}(\bm{y}^{k + 1}_j, \bm{z}^{k + 1}_j, \tau_j) - \bm{f}_\mathrm{im}(\bm{y}^k_j, \bm{z}^k_j, \tau_j)\right) \notag \\
        &\quad\,+ \sum_{j = 1}^m \tilde{q}_{m, j}^\mathrm{ex} \left(\bm{f}_\mathrm{ex}(\bm{y}^{k + 1}_j, \bm{z}^{k + 1}_j, \tau_j) - \bm{f}_\mathrm{ex}(\bm{y}^k_j, \bm{z}^k_j, \tau_j)\right) \notag \\
        &\quad\,+ \sum_{j = 1}^M q_{m, j} \bm{f}(\bm{y}^k_j, \bm{z}^k_j, \tau_j), \\
        \bm{0} &= \bm{g} (\bm{y}^{k + 1}_m, \bm{z}^{k + 1}_m, \tau_m),
    \end{align} \label{eq:imex_sdc_c}
\end{subequations}
with coefficients $\{\tilde{q}_{m, j}^\mathrm{im}\}_{j,m=1,..,M}$ of an implicit quadrature rule $\bm{Q}_\Delta^{\texttt{im}}$ and coefficients $\{\tilde{q}_{m, j}^\mathrm{ex}\}_{j,m=1,..,M}$ of an explicit quadrature rule $\bm{Q}_\Delta^{\texttt{ex}}$.

In contrast, a semi-implicit version for general IDEs is proposed in \cite{Bu2012}. They suggested splitting the right-hand side of the IDE to solve the problem in an IMEX way. In contrast to their approach, which applies SDC to the entire IDE - including the algebraic constraints - by splitting the right-hand side into implicit and explicit parts, our proposed \texttt{SDC-C} (and \texttt{IMEX-SDC-C}) restricts the spectral integration to the differential equations only and retains the algebraic constraint as an implicit condition in the system.

\subsection{Local truncation error of constrained SDC method} \label{sec:sdc_dae_lte}
Y. Xia et al. have shown that each iteration of the original SDC method \prettyref{eq:general_sdc} improves the order of the numerical solution by one order up to the maximum order of the underlying quadrature rule \cite{Shu07}. We modify the proof and show that the same result is obtained for the \texttt{SDC-C} method. The proof uses an induction (and resembles the proof structure in \cite[Lemma~2.1]{Shu07}). It includes three steps: First, we will show that the provisional solution across all collocation nodes obtained by spreading the initial condition to each node is first-order accurate, which corresponds to iteration $k = 0$. This differs from the approach to compute the provisional solution using a low-order method. In the second step, numerical solutions $\bm{y}^k_1$ and $\bm{z}^k_1$ at the first collocation node are shown to be of order $k + 1$. The first two steps represent the base case in \prettyref{thm:lte_sdc_c} that states the main result: The solutions $\bm{y}^k_m$ and $\bm{z}^k_m$ at arbitrary node $\tau_m$ are of order $\mathcal{O}(\Delta t^{k + 1})$. The entire structure of the proof is illustrated in \prettyref{fig:fig_1}.

The proof requires some preliminaries that are formulated below. The quadrature weights of the spectral quadrature rule and those of the low-order rule scale linearly with the time step size $\Delta t$.

\begin{figure}[!t]
    \centering
    \includegraphics[width=0.9\textwidth]{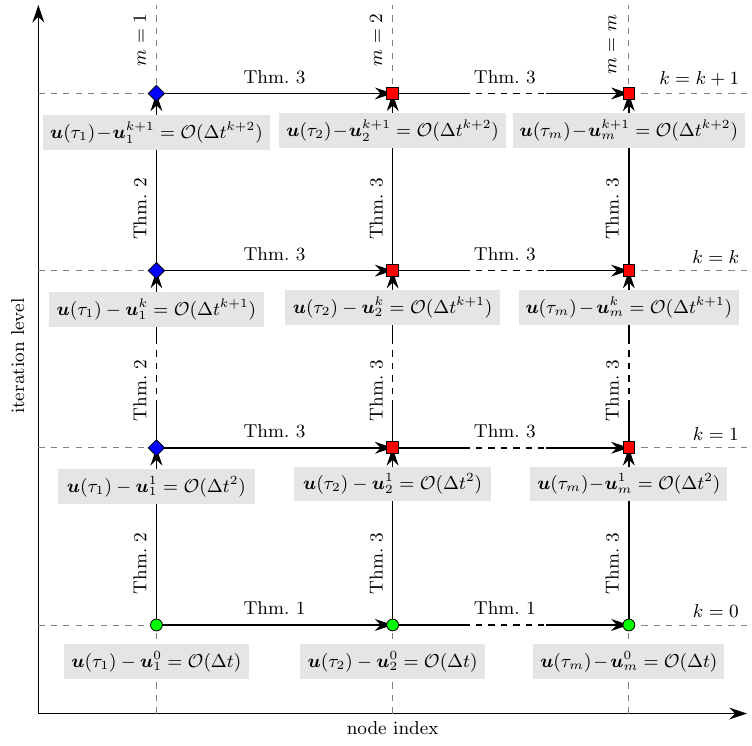}
    \caption{Structure of induction proof. Green nodes indicate the cases shown by \prettyref{thm:lte_initial_SDC_C} and present the base case for $k = 0$ and for all $m$. Blue nodes present cases proven by \prettyref{thm:lte_iters_first}, they represent the base case for $m = 1$ and all iteration levels $k$. The red nodes are obtained finally by \prettyref{thm:lte_sdc_c}.}
    \label{fig:fig_1}
\end{figure}

\begin{lemma}
    \label{lm:Q_IE_EE}
    The coefficients $\tilde{q}_{m, j}$ of the matrices $\bm{Q}_\Delta^{\texttt{IE}}$ and $\bm{Q}_\Delta^{\texttt{EE}}$ satisfy
    \begin{equation}
        \tilde{q}_{m, j} = \mathcal{O}(\Delta t)
    \end{equation}
    for $j, m = 1,..,M$.
\end{lemma}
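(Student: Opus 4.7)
The plan is to unwind the definition of the substeps $\Delta\tau_j$ in terms of the global step size $\Delta t$ and read off the bound directly from the explicit form of the two matrices given in \prettyref{eq:QI_IE_EE}.

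First I would recall from the setup before \prettyref{eq:collocation_problem} that the Radau IIA nodes are parameterised as $\tau_m = t_0 + c_m\Delta t$ with fixed constants $c_m\in[0,1]$ that do not depend on $\Delta t$. Setting $c_0 := 0$ so that $\Delta\tau_1 = \tau_1 - t_0 = c_1\Delta t$, I would write
\begin{equation*}
\Delta\tau_j \;=\; \tau_j - \tau_{j-1} \;=\; (c_j - c_{j-1})\,\Delta t,\qquad j=1,\dots,M,
\end{equation*}
so each substep is a fixed multiple of $\Delta t$ with multiplier in $[0,1]$.

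Next I would read off the entries of the two triangular matrices in \prettyref{eq:QI_IE_EE}. Every nonzero entry of $\bm{Q}_\Delta^{\texttt{IE}}$ is of the form $\tilde q_{m,j} = \Delta\tau_j = (c_j-c_{j-1})\Delta t$ for some $1\le j\le m\le M$; every nonzero entry of $\bm{Q}_\Delta^{\texttt{EE}}$ is of the form $\tilde q_{m,j} = \Delta\tau_{j+1} = (c_{j+1}-c_j)\Delta t$ for some $1\le j\le m-1$. In either case $|\tilde q_{m,j}| \le (\max_j (c_j-c_{j-1}))\,\Delta t \le \Delta t$, with a constant that is fixed once the node family is chosen. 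The remaining entries are zero and trivially satisfy the $\mathcal{O}(\Delta t)$ bound.

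There is no real obstacle here; the lemma is essentially a bookkeeping statement that makes explicit the scaling behaviour of the low-order preconditioner with $\Delta t$, so that in the subsequent induction proof (\prettyref{thm:lte_initial_SDC_C}, \prettyref{thm:lte_iters_first}, \prettyref{thm:lte_sdc_c}) each occurrence of $\tilde q_{m,j}$ can be absorbed into a power of $\Delta t$. The only point to mention carefully is that the hidden constants depend on the fixed nodes $c_1,\dots,c_M$ but are independent of $\Delta t$, so the bound holds uniformly in the step size.
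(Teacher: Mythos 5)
Your proposal is correct and follows the same route as the paper, whose proof is simply the one-line instruction to inspect the explicit matrices in \prettyref{eq:QI_IE_EE}; you merely spell out the bookkeeping (each nonzero entry is a substep $\Delta\tau_j = (c_j-c_{j-1})\Delta t$ with fixed $c_j\in[0,1]$) that the paper leaves implicit. No issues.
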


\begin{proof}
    Consider \prettyref{eq:QI_IE_EE}.
\end{proof}

\begin{lemma}
    \label{lm:quad}
    Assume that the quadrature weights $\hat{q}_{m, j}$ are the result of spectral quadrature to integrate a function over the interval $[-1, 1]$. Then, the weights $q_{m, j}$ for integration over $[t_0, \tau_m]$ satisfy
    \begin{equation}
        q_{m, j} = \mathcal{O} (\Delta t).
    \end{equation}
\end{lemma}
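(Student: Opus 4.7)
The plan is to reduce the claim to a routine affine change of variables between the reference interval $[-1,1]$ on which the spectral nodes and weights $\hat{q}_{m,j}$ are defined and the actual time interval $[t_0, t_1]$ of length $\Delta t$. Concretely, I would introduce the standard mapping $s = t_0 + \tfrac{\Delta t}{2}(1+\hat{s})$, so that $ds = \tfrac{\Delta t}{2}\,d\hat{s}$ and the reference nodes $\hat{\tau}_j \in [-1,1]$ correspond to the time nodes $\tau_j = t_0 + \tfrac{\Delta t}{2}(1+\hat{\tau}_j)$. In particular the upper integration limit $\tau_m$ corresponds to $\hat{\tau}_m$.

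Next I would verify that the Lagrange polynomials are invariant under this affine change of variables. Since the $j$-th Lagrange polynomial is defined by the interpolation conditions $\ell_j(\tau_i) = \delta_{ij}$, and the affine map takes $\hat{\tau}_i \mapsto \tau_i$, we have $\ell_j(s(\hat{s})) = \hat{\ell}_j(\hat{s})$ where $\hat{\ell}_j$ denotes the $j$-th Lagrange polynomial built from the reference nodes. Substituting into the definition of $q_{m,j}$ gives
\begin{equation*}
q_{m,j} \;=\; \int_{t_0}^{\tau_m} \ell_j(s)\, ds \;=\; \frac{\Delta t}{2}\int_{-1}^{\hat{\tau}_m} \hat{\ell}_j(\hat{s})\, d\hat{s} \;=\; \frac{\Delta t}{2}\,\hat{q}_{m,j}.
\end{equation*}

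Finally, since the reference weights $\hat{q}_{m,j}$ depend only on the choice of quadrature rule and the fixed reference interval $[-1,1]$, they are absolute constants independent of $\Delta t$. Hence $q_{m,j} = \mathcal{O}(\Delta t)$, as claimed. There is no real obstacle in this proof; the only thing to be careful about is making the invariance of the Lagrange polynomials under the affine rescaling explicit, so that the factor $\Delta t/2$ from $ds$ is the only source of $\Delta t$-dependence.
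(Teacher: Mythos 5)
Your proposal is correct and follows essentially the same route as the paper: an affine change of variables from the reference interval to the time interval, with the observation that the reference weights are independent of $\Delta t$, so the Jacobian of the map is the only source of the factor $\mathcal{O}(\Delta t)$. The paper states the scale factor as $\tfrac{\tau_m - t_0}{2}$ rather than $\tfrac{\Delta t}{2}$ (a different but equivalent normalization of the reference weights), and your added remark on the affine invariance of the Lagrange polynomials simply makes explicit a step the paper leaves implicit.
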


\begin{proof}
    Note that $\hat{q}_{m, j}$ are independent of $\Delta t$. To integrate a function over $[t_0, \tau_m]$ a change of variables is applied. The transformed quadrature weights are
    \begin{equation}
        q_{m, j} = \frac{\tau_m - t_0}{2} \hat{q}_{m, j} = \mathcal{O} (\Delta t).
    \end{equation}
\end{proof}

For a compact interval $\mathcal{I}$, the boundedness of Jacobian matrices $\bm{J}_{\bm{G}}$, $\bm{J}_{\bm{f}}^{\bm{y}}$, and $\bm{J}_{\bm{f}}^{\bm{z}}$ is ensured, thus the matrices are in $\mathcal{O} (1)$.

\begin{lemma}
    \label{lm:compact_image}
    Let $\mathcal{I}$ be a compact interval. Let $\bm{y}$ and $\bm{z}$ be continuous functions on that compact interval. Then, the subset
    \begin{align*}
        \{t \in \mathcal{I}:(\bm{y}(t), \bm{z}(t)) \in \mathbb{R}^{n_d} \times \mathbb{R}^{n_a}\} \subset \mathbb{R}^{n_d} \times \mathbb{R}^{n_a}
    \end{align*}
    defines a compact subset.
\end{lemma}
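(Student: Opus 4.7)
The assertion is essentially a standard topology statement: the set in question is the image of $\mathcal{I}$ under the map $\bm{\Phi}: \mathcal{I} \to \mathbb{R}^{n_d}\times\mathbb{R}^{n_a}$, $t \mapsto (\bm{y}(t), \bm{z}(t))$, and one wants to conclude that this image is compact. The plan is therefore to reduce the claim to the textbook fact that the continuous image of a compact set is compact.

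First, I would clarify the set under consideration, writing it as $\bm{\Phi}(\mathcal{I}) = \{(\bm{y}(t),\bm{z}(t)) : t \in \mathcal{I}\}$. Next I would verify that $\bm{\Phi}$ is continuous on $\mathcal{I}$: since $\bm{y}$ and $\bm{z}$ are continuous by assumption, each component of $\bm{\Phi}$ is continuous, and a map into a product space is continuous if and only if each component is continuous. Equivalently, one can note that $\mathbb{R}^{n_d}\times\mathbb{R}^{n_a}\cong\mathbb{R}^{n_d+n_a}$ as a normed space and estimate
\begin{equation*}
    \|\bm{\Phi}(t)-\bm{\Phi}(s)\| \leq \|\bm{y}(t)-\bm{y}(s)\| + \|\bm{z}(t)-\bm{z}(s)\|,
\end{equation*}
so that continuity of $\bm{y}$ and $\bm{z}$ immediately yields continuity of $\bm{\Phi}$.

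Finally, I would invoke the Heine--Borel type result: since $\mathcal{I}$ is compact and $\bm{\Phi}$ is continuous, the image $\bm{\Phi}(\mathcal{I})$ is compact in $\mathbb{R}^{n_d}\times\mathbb{R}^{n_a}$. As an alternative (or complementary) argument, one can show directly that the image is closed and bounded: boundedness follows from the fact that continuous functions on a compact interval are bounded (so $\|\bm{y}(t)\|$ and $\|\bm{z}(t)\|$ are uniformly bounded on $\mathcal{I}$), and closedness follows by extracting a convergent subsequence from any sequence $\{(\bm{y}(t_n),\bm{z}(t_n))\}$ via compactness of $\mathcal{I}$ and continuity of $\bm{\Phi}$.

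There is no real obstacle here; the statement is a routine consequence of continuity plus compactness of $\mathcal{I}$. The only subtlety worth flagging in writing is the notational one: the displayed set is written with $t\in\mathcal{I}$ as the running variable while being declared a subset of $\mathbb{R}^{n_d}\times\mathbb{R}^{n_a}$, so the proof should briefly record that the intended object is the image under $\bm{\Phi}$. This lemma will then be used downstream to apply extreme-value-type bounds to the Jacobians $\bm{J}_{\bm{G}}$, $\bm{J}_{\bm{f}}^{\bm{y}}$, $\bm{J}_{\bm{f}}^{\bm{z}}$ over a compact image, justifying the $\mathcal{O}(1)$ estimates announced immediately before the lemma.
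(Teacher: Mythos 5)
Your proof is correct and, in fact, more complete than the one the paper gives. The paper's entire proof reads: ``Since $\bm{y}$ and $\bm{z}$ are continuous on a compact interval, they are bounded by the extreme value theorem.'' That establishes only boundedness of the image, which by itself does not give compactness in $\mathbb{R}^{n_d}\times\mathbb{R}^{n_a}$; closedness is also needed, and the paper does not address it. Your route --- viewing the set as $\bm{\Phi}(\mathcal{I})$ for the continuous map $\bm{\Phi}=(\bm{y},\bm{z})$ and invoking the fact that the continuous image of a compact set is compact --- supplies both closedness and boundedness in one step and is the argument the lemma actually requires, since the downstream use (Lemma on $\bm{J}_{\bm{G}}$) explicitly relies on compactness of the image, not merely boundedness. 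Your remark about the notational slip in the statement (the set is written with $t\in\mathcal{I}$ as the running variable while being declared a subset of the product space, so the intended object is the image $\{(\bm{y}(t),\bm{z}(t)):t\in\mathcal{I}\}$) is also well taken; the paper leaves that implicit.
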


\begin{proof}
    Since $\bm{y}$ and $\bm{z}$ are continuous in a compact interval, they are bounded by the extreme value theorem.
\end{proof}

\begin{lemma}
    \label{lm:bounded_J_G}
    Assume $\bm{g}$ is at least a $C^2$ function, i.e., $\bm{g}$ is differentiable and the partial derivatives $\frac{\partial \bm{g}_i}{\partial y_k}$ and $\frac{\partial \bm{g}_i}{\partial z_j}$ for $i,j=1,..,n_a$, $k=1,..,n_d$ exist and are continuous. Additionally, we assume that $\bm{J}_{\bm{g}}^{\bm{z}}$ is invertible. Then, the Jacobian $\bm{J}_{\bm{G}}$ of the function $\bm{G}$ in \prettyref{eq:jacobian_G_ift} is bounded.
\end{lemma}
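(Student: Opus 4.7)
The plan is to combine the compact-image lemma (Lemma~\ref{lm:compact_image}) with the $C^1$ hypothesis on $\bm{g}$ and the non-singularity of $\bm{J}_{\bm{g}}^{\bm{z}}$, and then chain the resulting pointwise bounds into a uniform bound on $\bm{J}_{\bm{G}}$ along the trajectory. The only formula available is the identity
\begin{equation*}
  \bm{J}_{\bm{G}}(\bm{y}(t),t) = -\bigl(\bm{J}_{\bm{g}}^{\bm{z}}\bigr)^{-1}\bm{J}_{\bm{g}}^{\bm{y}}\bigl(\bm{y}(t),\bm{G}(\bm{y}(t),t),t\bigr),
\end{equation*}
so the task reduces entirely to bounding each factor on the right-hand side.

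First, I would note that the implicit function theorem gives $\bm{G}(\bm{y}(t),t)=\bm{z}(t)$ whenever $\bm{z}$ is the solution branch produced by it, so evaluating the factors $\bm{J}_{\bm{g}}^{\bm{y}}$ and $\bm{J}_{\bm{g}}^{\bm{z}}$ at $(\bm{y}(t),\bm{G}(\bm{y}(t),t),t)$ is the same as evaluating them at $(\bm{y}(t),\bm{z}(t),t)$. Since $\bm{y}$ and $\bm{z}$ are continuous on the compact interval $\mathcal{I}$, Lemma~\ref{lm:compact_image} places the trajectory inside a compact subset $K\subset\mathbb{R}^{n_d}\times\mathbb{R}^{n_a}\times\mathcal{I}$, and it suffices to bound both Jacobian factors on $K$.

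Next, the $C^1$ assumption makes every entry of $\bm{J}_{\bm{g}}^{\bm{y}}$ and $\bm{J}_{\bm{g}}^{\bm{z}}$ continuous, and by the extreme value theorem these entries are bounded on $K$, giving uniform bounds $\|\bm{J}_{\bm{g}}^{\bm{y}}\|\le C_1$ and $\|\bm{J}_{\bm{g}}^{\bm{z}}\|\le C_2$ in any fixed matrix norm. For the inverse, I would use that $\det\bm{J}_{\bm{g}}^{\bm{z}}$ is a polynomial in the entries, hence a continuous scalar function on $K$, and it never vanishes by the index-one hypothesis. Compactness then yields $|\det\bm{J}_{\bm{g}}^{\bm{z}}|\ge c>0$ on $K$, and Cramer's rule (or equivalently continuity of matrix inversion on the open set of invertibles) gives a uniform bound $\|(\bm{J}_{\bm{g}}^{\bm{z}})^{-1}\|\le C_3$.

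Finally, the product of two matrix-valued functions, each uniformly bounded in operator norm, is uniformly bounded, so $\|\bm{J}_{\bm{G}}\|\le C_1 C_3$ on $K$, proving the claim. The only step that is not completely mechanical is the passage from pointwise invertibility of $\bm{J}_{\bm{g}}^{\bm{z}}$ to a uniform bound on its inverse; this is where the compactness of $K$ is essential, because without it a pointwise non-singular Jacobian could still have an unbounded inverse as $t$ ranges over $\mathcal{I}$. Everything else is a routine application of continuity and the extreme value theorem.
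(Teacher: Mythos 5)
Your proposal is correct and follows essentially the same route as the paper: restrict to the compact trajectory set from \prettyref{lm:compact_image}, bound $\bm{J}_{\bm{g}}^{\bm{y}}$ and $\left(\bm{J}_{\bm{g}}^{\bm{z}}\right)^{-1}$ there by continuity and the extreme value theorem, and multiply. You are in fact more careful than the paper at the one non-trivial step: the paper simply asserts that invertibility of $\bm{J}_{\bm{g}}^{\bm{z}}$ makes its inverse bounded, whereas your compactness-plus-nonvanishing-determinant (Cramer's rule) argument is what actually justifies the uniform bound on the inverse.
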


\begin{proof}
    Applying \prettyref{lm:compact_image} the domain of $\bm{g}$ denoted as $D := \{(\bm{y}(t), \bm{z}(t), t): t \in \mathcal{I}\}$ is a compact subset in $\mathbb{R}^{n_d} \times \mathbb{R}^{n_a} \times \mathcal{I}$. Therefore, the image $\bm{g}(D)$ of a continuous function with a compact domain $D$ is compact. Since $\bm{g}$ is at least continuously differentiable, the partial derivatives exist and therefore are bounded on $D$.

    Differentiating the left-hand side of the equation $\bm{g}(\bm{y} (t), \bm{G}(\bm{y} (t), t), t) = \bm{0}$ using the chain rule and solving to $\bm{J}_{\bm{G}}$ the Jacobian of $\bm{G}$ is given by \prettyref{eq:jacobian_G_ift}.
    Due to the invertibility of $\bm{J}_{\bm{g}}^{\bm{z}}$, both the inverse of $\bm{J}_{\bm{g}}^{\bm{z}}$ and $\bm{J}_{\bm{g}}^{\bm{y}}$ are bounded, and thus $\bm{J}_{\bm{G}}$. 
\end{proof}

\begin{lemma}
    \label{lm:bounded_J_f}
    Assume $\bm{f}$ is at least a $C^2$ function, i.e., $\bm{f}$ is differentiable and the partial derivatives $\frac{\partial \bm{f}_i}{\partial y_j}$ and $\frac{\partial \bm{f}_i}{\partial z_k}$ for $i,j=1,..,n_d$, $k=1,..,n_a$ exist and are continuous. Then, the Jacobian matrices $\bm{J}_{\bm{f}}^{\bm{y}}$ and $\bm{J}_{\bm{f}}^{\bm{z}}$ are bounded.
\end{lemma}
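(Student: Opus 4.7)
The plan is to mirror the argument used for Lemma~\ref{lm:bounded_J_G}, which is actually the harder result because it required inverting $\bm{J}_{\bm{g}}^{\bm{z}}$. Here the conclusion follows directly from compactness of the evaluation domain together with continuity of the partial derivatives, with no invertibility assumption needed.

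First I would invoke Lemma~\ref{lm:compact_image} to assert that the set $D := \{(\bm{y}(t), \bm{z}(t), t) : t \in \mathcal{I}\}$ is a compact subset of $\mathbb{R}^{n_d} \times \mathbb{R}^{n_a} \times \mathcal{I}$, using the standing hypothesis that $\bm{y}$ and $\bm{z}$ are continuous on the compact interval $\mathcal{I}$. Next, since $\bm{f}$ is $C^1$ on (a neighborhood of) $D$, every entry $\tfrac{\partial \bm{f}_i}{\partial y_j}$ and $\tfrac{\partial \bm{f}_i}{\partial z_j}$ is a continuous real-valued function on $D$.

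By the extreme value theorem, each such continuous partial derivative attains its maximum and minimum on the compact set $D$, and is therefore bounded. Since $\bm{J}_{\bm{f}}^{\bm{y}}$ and $\bm{J}_{\bm{f}}^{\bm{z}}$ are matrices whose finitely many entries are all bounded on $D$, the matrices themselves are bounded (in any matrix norm, which are all equivalent in finite dimension). This completes the argument.

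The main obstacle, if any, is really a matter of notation: one should emphasize that the boundedness is uniform in $t \in \mathcal{I}$ along the trajectory $(\bm{y}(t), \bm{z}(t))$, which is exactly what is needed in the subsequent local truncation error analysis of \texttt{SDC-C}. No further technical machinery is required beyond Lemma~\ref{lm:compact_image} and the extreme value theorem.
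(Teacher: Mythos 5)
Your proposal is correct and follows essentially the same route as the paper, which simply cites Lemma~\ref{lm:compact_image} and the argument from the proof of Lemma~\ref{lm:bounded_J_G}: compactness of the trajectory set plus continuity of the partial derivatives gives boundedness via the extreme value theorem. Your write-up is in fact more explicit than the paper's one-line proof, correctly noting that no invertibility hypothesis is needed here.
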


\begin{proof}
    Applying \prettyref{lm:compact_image} (as in the proof for \prettyref{lm:bounded_J_G}) already proves what is to be shown. 
\end{proof}

Define the local errors in $\bm{y}$ and $\bm{z}$ at node $\tau_m$ in iteration $k$ as $\bm{e}^k_{\bm{y},m} := \bm{y}(\tau_m) - \bm{y}^k_m$ and $\bm{e}^k_{\bm{z},m} := \bm{z}(\tau_m) - \bm{z}^k_m$. For the provisional solution, we use the initialization
\begin{equation}
    \bm{y}^0_m = \bm{y}_0 \qquad \text{and} \qquad \bm{z}^0_m = \bm{z}_0,
\end{equation}
that is obtained by spreading the initial condition to each node $\tau_m$. For semi-explicit DAE problems, the provisional solution should be consistent with the algebraic constraints. The following result states that the initialization is first-order accurate.

\begin{theorem}
    \label{thm:lte_initial_SDC_C}
    Let $\bm{f}$ and $\bm{g}$ be at least $C^2$ functions. The problem \prettyref{eq:semiexplicit_dae} is assumed to have index one, i.e., the Jacobian part $\bm{J}_{\bm{g}}^{\bm{z}}$ is invertible and is assumed to be bounded. Then, the provisional solution of the \texttt{SDC-C} scheme satisfies
    \begin{equation}
        \bm{e}^0_{\bm{y},m} = \mathcal{O}(\Delta t) \quad \text{and} \quad \bm{e}^0_{\bm{z},m} = \mathcal{O}(\Delta t)
    \end{equation}
    for $m=1, \dots ,M$.
\end{theorem}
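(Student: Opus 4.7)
The plan is to exploit the two defining features of the provisional iterate in \texttt{SDC-C}: the differential part is obtained by spreading the initial condition, while the algebraic part is forced to satisfy the constraint already at $k=0$. Concretely, $\bm{y}_m^0 = \bm{y}_0$ for $m=1,\dots,M$, and $\bm{z}_m^0$ is the (locally unique) solution of $\bm{0} = \bm{g}(\bm{y}_0,\bm{z}_m^0,\tau_m)$, which by \prettyref{eq:ift} can be written as $\bm{z}_m^0 = \bm{G}(\bm{y}_0,\tau_m)$. With this, the two error estimates decouple naturally: first bound $\bm{e}_{\bm{y},m}^0$ directly, then transfer the bound to $\bm{e}_{\bm{z},m}^0$ through the smoothness of $\bm{G}$.

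For the differential error, I would use the exact Picard representation
\begin{equation*}
\bm{y}(\tau_m) - \bm{y}_0 = \int_{t_0}^{\tau_m} \bm{f}(\bm{y}(s),\bm{z}(s),s)\,\mathrm{d}s.
\end{equation*}
Since $\bm{f}$ is continuous and, by the same compactness argument used in \prettyref{lm:compact_image} applied to the exact solution pair $(\bm{y},\bm{z})$ on $\mathcal{I}$, the integrand is bounded on $[t_0,\tau_m]$. Hence $\bm{e}_{\bm{y},m}^0 = \bm{y}(\tau_m) - \bm{y}_0 = \mathcal{O}(\tau_m - t_0) = \mathcal{O}(\Delta t)$.

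For the algebraic error, I would exploit that both $\bm{z}(\tau_m)$ and $\bm{z}_m^0$ are expressible through $\bm{G}$, namely $\bm{z}(\tau_m) = \bm{G}(\bm{y}(\tau_m),\tau_m)$ and $\bm{z}_m^0 = \bm{G}(\bm{y}_0,\tau_m)$. A first-order Taylor expansion in the first argument (at fixed time $\tau_m$) gives
\begin{equation*}
\bm{e}_{\bm{z},m}^0 = \bm{G}(\bm{y}(\tau_m),\tau_m) - \bm{G}(\bm{y}_0,\tau_m) = \bm{J}_{\bm{G}}(\xi,\tau_m)\,(\bm{y}(\tau_m) - \bm{y}_0)
\end{equation*}
for some intermediate point $\xi$ on the line segment. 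By \prettyref{lm:bounded_J_G}, $\bm{J}_{\bm{G}}$ is bounded under the index-one assumption, so combining with the differential estimate yields $\bm{e}_{\bm{z},m}^0 = \mathcal{O}(\Delta t)$.

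The only subtle point, and the main thing to be careful about, is the algebraic part: if $\bm{z}_m^0$ were merely an arbitrary spread of $\bm{z}_0$ (as is common in ODE-SDC), the resulting error would generically be $\mathcal{O}(1)$ rather than $\mathcal{O}(\Delta t)$, because $\bm{z}$ need not be continuous in the DAE sense with respect to perturbations of $\bm{y}$ off the constraint manifold. The estimate works precisely because \texttt{SDC-C} enforces $\bm{0} = \bm{g}(\bm{y}_m^{k},\bm{z}_m^{k},\tau_m)$ at every iterate including $k=0$, so that $\bm{z}_m^0$ lives on the same constraint manifold as $\bm{z}(\tau_m)$ and the error is controlled solely by how far $\bm{y}_0$ lies from $\bm{y}(\tau_m)$.
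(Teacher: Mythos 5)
Your proposal is correct and follows essentially the same route as the paper: bound $\bm{e}^0_{\bm{y},m}$ by the boundedness of the integrand over an interval of length $\mathcal{O}(\Delta t)$, then transfer the bound to $\bm{e}^0_{\bm{z},m}$ via a first-order expansion of $\bm{G}$ using \prettyref{lm:bounded_J_G}. Your closing caveat slightly overstates the danger of spreading $\bm{z}_0$: for a \emph{consistent} initial value $\bm{z}_0=\bm{G}(\bm{y}_0,t_0)$ the smoothness of $\bm{G}$ in $t$ would still give $\bm{e}^0_{\bm{z},m}=\mathcal{O}(\Delta t)$, but this side remark does not affect the validity of your argument.
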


\begin{proof}
    Subtracting the provisional solution in $\bm{y}$ from the evaluated Picard integral equation \prettyref{eq:constrained_picard_integral} at $t = \tau_m$ we have
    \begin{equation}
    \begin{split}
        \bm{e}^0_{\bm{y},m} &= \bm{y}(t_0) + \int_{t_0}^{\tau_m} \bm{f}(\bm{y}(s), \bm{z}(s), s)\,\mathrm{ds} - \bm{y}_0 = \mathcal{O}(\Delta t)\\
    \end{split} \label{eq:LTE_y_provisional}
\end{equation}
by applying the left rectangle rule. Using \prettyref{eq:ift} the local error in $\bm{z}$ is given by
\begin{equation}
    \bm{e}^0_{\bm{z},m} = \bm{G}(\bm{y}(\tau_m), \tau_m) - \bm{G}(\bm{y}^0_m, \tau_m).
\end{equation}
Expanding $\bm{G}(\bm{y}^0_m, \tau_m)$ in a Taylor series around $(\bm{y} (t), t) = (\bm{y}(\tau_m), \tau_m)$ by
\begin{equation}
    \begin{split}
        \bm{G}(\bm{y}^0_m, \tau_m) 
        &= \bm{G}(\bm{y}(\tau_m), \tau_m) -\bm{J}_{\bm{G}} (\bm{y}(\tau_m), \tau_m) \bm{e}^0_{\bm{y},m} + \mathcal{O}((\bm{e}^0_{\bm{y},m})^2),
    \end{split}
\end{equation}
the error becomes
\begin{equation*}
    \begin{split}
        \bm{G}(\bm{y}(\tau_m), \tau_m) - \bm{G}(\bm{y}^0_m, \tau_m) 
        &= \bm{J}_{\bm{G}} (\bm{y}(\tau_m), \tau_m) \bm{e}^0_{\bm{y},m} + \mathcal{O}((\bm{e}^0_{\bm{y},m})^2) = \mathcal{O}(\Delta t)\\
    \end{split}
\end{equation*}
using \prettyref{eq:LTE_y_provisional} and \prettyref{lm:bounded_J_G}.
\end{proof}

\begin{remark}
    The original SDC method in \cite{Dutt2000} uses a provisional solution computed by implicit Euler, or explicit Euler that is different to the choice in this work as also noted in \prettyref{sec:sdc}. Nevertheless, spreading the initial condition provides a solution with first-order accuracy as shown above.
\end{remark}

\begin{theorem}
    \label{thm:lte_iters_first}
    Under the assumptions of \prettyref{thm:lte_initial_SDC_C} the numerical solutions in iteration $k$ at first node $\tau_1$ obtained by the \texttt{SDC-C} scheme satisfy
    \begin{equation}
        \bm{e}^k_{\bm{y},1} = \mathcal{O}(\Delta t^{k + 1}) \quad \text{and} \quad \bm{e}^k_{\bm{z},1} = \mathcal{O}(\Delta t^{k + 1})
    \end{equation}
    for $k=0,1,..,2M-1$.
\end{theorem}

\begin{proof}[by induction]
    In the base case, the error $\bm{e}^1_{\bm{z},1}$ of \texttt{SDC-C} can easily be determined by computing the difference
    \begin{equation}
        \begin{split}
            \bm{e}^1_{\bm{z},1} &= \bm{G}(\bm{y}(\tau_1), \tau_1) - \bm{G}(\bm{y}^1_1, \tau_1) 
            = \bm{J}_{\bm{G}} (\bm{y}(\tau_1), \tau_1) \bm{e}^1_{\bm{y},1} + \mathcal{O}((\bm{e}^1_{\bm{y},1})^2), \label{eq:LTE_z}
        \end{split}
    \end{equation}
    where $\bm{G }(\bm{y}^1_1, \tau_1)$ is expanded in a Taylor series around $(\bm{y} (t), t) = (\bm{y}(\tau_1), \tau_1)$ by
    \begin{equation*}
        \begin{split}
            \bm{G}(\bm{y}^1_1, \tau_1) 
            &= \bm{G}(\bm{y}(\tau_1), \tau_1) - \bm{J}_{\bm{G}} (\bm{y}(\tau_1), \tau_1) \bm{e}^1_{\bm{y},1} + \mathcal{O}((\bm{e}^1_{\bm{y},1})^2).
        \end{split}
    \end{equation*}
    Thus, the local error in $\bm{z}$ depends on the local error in $\bm{y}$. We now determine the local error in $\bm{y}$. Taking the difference of the evaluated Picard integral equation \prettyref{eq:constrained_picard_integral} at $t = \tau_1$ and equation \prettyref{eq:sdc_c_eq1} for $k = 1$ leads to
    \begin{equation}
        \begin{split}
            \bm{e}^1_{\bm{y},1} 
            &= \int_{t_0}^{\tau_1} \bm{f}(\bm{y}(s), \bm{z}(s), s)\,\mathrm{ds} - \sum_{j=1}^M q_{1,j} \bm{f}(\bm{y}^0_j, \bm{z}^0_j, \tau_j) \\
            &\quad- \tilde{q}_{1,1} (f(\bm{y}^1_1, \bm{z}^1_1, \tau_1) - \bm{f}(\bm{y}^0_1, \bm{z}^0_1, \tau_1)). \label{eq:base_eq}
        \end{split}
    \end{equation}
    To estimate the spectral quadrature error in \prettyref{eq:base_eq} adding a zero term, we obtain
    \begin{subequations} \label{eq:first_quantity}
        \begin{align}
            &\int_{t_0}^{\tau_1} \bm{f}(\bm{y}(s), \bm{z}(s), s)\,\mathrm{ds} - \sum_{j=1}^M q_{1,j} \bm{f}(\bm{y}^0_j, \bm{z}^0_j, \tau_j) \label{eq:first_quantity_eq1} \\
            &\qquad\qquad= \left(\int_{t_0}^{\tau_1} \bm{f}(\bm{y}(s), \bm{z}(s), s)\,\mathrm{ds} - \sum_{j=1}^M q_{1,j} \bm{f}(\bm{y}(\tau_j), \bm{z}(\tau_j), \tau_j)\right) \label{eq:first_quantity_eq2} \\
            &\qquad\qquad\quad+ \left(\sum_{j=1}^M q_{1,j} \bm{f}(\bm{y}(\tau_j), \bm{z}(\tau_j), \tau_j) - \sum_{j=1}^M q_{1,j} \bm{f}(\bm{y}^0_j, \bm{z}^0_j, \tau_j)\right). \label{eq:first_quantity_eq3}
        \end{align}
    \end{subequations}
    Since the quadrature rule with weights $q_{1, j}$ only requires the values of $\bm{f}$ evaluated at the collocation nodes, we can interpret $\bm{f}$ evaluated at $(\bm{y}(\tau_j), \bm{z}(\tau_j), \tau_j)$ for $j=1,..,M$ as an interpolating polynomial with error $\mathcal{O}(\Delta t^M)$. Using \prettyref{lm:quad}, the spectral quadrature error in \prettyref{eq:first_quantity_eq2} is estimated as
    \begin{equation*}
        \int_{t_0}^{\tau_1} \bm{f}(\bm{y}(s), \bm{z}(s), s)\,\mathrm{ds} - \sum_{j=1}^M q_{1,j} \bm{f}(\bm{y}(\tau_j), \bm{z}(\tau_j), \tau_j) = \mathcal{O}(\Delta t^{M + 1}).
    \end{equation*}
    The second error can be estimated as
    \begin{equation*}
        \begin{split}
            \sum_{j=1}^M q_{1,j} \bm{f}(\bm{y}(\tau_j), \bm{z}(\tau_j), \tau_j) &- \sum_{j=1}^M q_{1,j} \bm{f}(\bm{y}^0_j, \bm{z}^0_j, \tau_j) = \mathcal{O}(\Delta t^2)
        \end{split}
    \end{equation*}
    via the second-order Taylor series expansion of $\bm{f}(\bm{y}(\tau_j), \bm{z}(\tau_j), \tau_j)$ around the point $(y(t), z(t), t)=(\bm{y}^0_j, \bm{z}^0_j, \tau_j)$ by
    \begin{equation*}
        \begin{split}
            \bm{f}(\bm{y}(\tau_j), \bm{z}(\tau_j), \tau_j) 
            &= \bm{f}(\bm{y}^0_j, \bm{z}^0_j, \tau_j) + \bm{J}_{\bm{f}}^{\bm{y}} (\bm{y}^0_j, \bm{z}^0_j, \tau_j) \bm{e}^0_{\bm{y},j} + \bm{J}_{\bm{f}}^{\bm{z}} (\bm{y}^0_j, \bm{z}^0_j, \tau_j) \bm{e}^0_{\bm{z},j} \\
            &\quad+ \mathcal{O}((\bm{e}^0_{\bm{y},j})^2 + (\bm{e}^0_{\bm{z},j})^2),
        \end{split}
    \end{equation*}
    \prettyref{lm:quad}, \prettyref{lm:bounded_J_f}, and \prettyref{thm:lte_initial_SDC_C} for $j=1,..,M$.
    Thus, we get
    \begin{equation}
        \int_{t_0}^{\tau_1} \bm{f}(\bm{y}(s), \bm{z}(s), s)\,\mathrm{ds} - \sum_{j=1}^M q_{1,j} \bm{f}(\bm{y}^0_j, \bm{z}^0_j, \tau_j) = \mathcal{O}\left(\Delta t^{\min(2, M + 1)}\right). \label{eq:quadrature_error}
    \end{equation}
    In total, the error in \prettyref{eq:base_eq} modifies to 
    \begin{equation}
        \bm{e}^1_{\bm{y},1} = \mathcal{O}(\Delta t^{\min(2, M + 1)}) - \tilde{q}_{1,1} (\bm{f}(\bm{y}^1_1, \bm{z}^1_1, \tau_1) - \bm{f}(\bm{y}^0_1, \bm{z}^0_1, \tau_1)). \label{eq:base_eq_modified}
    \end{equation}
    We expand $\bm{f}(\bm{y}^1_1, \bm{z}^1_1, \tau_1)$ in a second-order Taylor series around $(y(t), z(t), t)=(\bm{y}^0_1, \bm{z}^0_1, \tau_1)$ by
    \begin{equation*}
        \begin{split}
            \bm{f}(\bm{y}^1_1, \bm{z}^1_1, \tau_1) &= \bm{f}(\bm{y}^0_1, \bm{z}^0_1, \tau_1) + \bm{J}_{\bm{f}}^{\bm{y}} (\bm{y}^0_1, \bm{z}^0_1, \tau_1) (\bm{y}^1_1 - \bm{y}^0_1) \\
            &\quad+ \bm{J}_{\bm{f}}^{\bm{z}} (\bm{y}^0_1, \bm{z}^0_1, \tau_1) (\bm{z}^1_1 - \bm{z}^0_1) + \mathcal{O}((\bm{y}^1_1 - \bm{y}^0_1)^2 + (\bm{z}^1_1 - \bm{z}^0_1)^2),
        \end{split}
    \end{equation*}
    and the error equation \prettyref{eq:base_eq_modified} becomes
    \begin{equation}
        \begin{split}
            \bm{e}^1_{\bm{y},1} &= \mathcal{O}(\Delta t^{\min(2, M + 1)}) - \tilde{q}_{1,1} \bm{J}_{\bm{f}}^{\bm{y}}(\bm{y}^0_1, \bm{z}^0_1, \tau_1) (\bm{y}^1_1 - \bm{y}^0_1) \\
            &\quad- \tilde{q}_{1,1} \bm{J}_{\bm{f}}^{\bm{z}}(\bm{y}^0_1, \bm{z}^0_1, \tau_1) (\bm{z}^1_1 - \bm{z}^0_1) + \mathcal{O}((\bm{y}^1_1 - \bm{y}^0_1)^2 + (\bm{z}^1_1 - \bm{z}^0_1)^2)).
        \end{split}\label{eq:error_Taylor}
    \end{equation}
    Manipulating the terms $\bm{y}^1_1 - \bm{y}^0_1$ and $\bm{z}^1_1 - \bm{z}^0_1$ by adding zeros, 
    we obtain
    \begin{equation}
        \begin{split}
            \bm{e}^1_{\bm{y},1} 
            &= \mathcal{O}(\Delta t^{\min(2, M + 1)}) + \tilde{q}_{1,1} \bm{J}_{\bm{f}}^{\bm{y}}(\bm{y}^0_1, \bm{z}^0_1, \tau_1) \bm{e}^1_{\bm{y},1} + \tilde{q}_{1,1} \bm{J}_{\bm{f}}^{\bm{z}}(\bm{y}^0_1, \bm{z}^0_1, \tau_1) \bm{e}^1_{\bm{z},1} \\
            &\quad+ \mathcal{O}((\bm{e}^1_{\bm{y},1})^2 + (\bm{e}^1_{\bm{z},1})^2)
        \end{split} \label{eq:error_Taylor_simplify}
    \end{equation}
    via \prettyref{lm:Q_IE_EE}, \prettyref{lm:bounded_J_f}, and \prettyref{thm:lte_initial_SDC_C}.
    Substituting $\bm{e}^1_{\bm{z},1}$ from \prettyref{eq:LTE_z}, the obtained error equation becomes independent of the error in $\bm{z}$. It is given by
    \begin{equation*}
        \begin{split}
            \bm{e}^1_{\bm{y},1} &= \mathcal{O}(\Delta t^{\min(2, M + 1)}) + \tilde{q}_{1,1} \bm{J}_{\bm{f}}^{\bm{y}} (\bm{y}^0_1, \bm{z}^0_1, \tau_1) \bm{e}^1_{\bm{y},1} \\
            &\quad+ \tilde{q}_{1,1} \bm{J}_{\bm{f}}^{\bm{z}} (\bm{y}^0_1, \bm{z}^0_1, \tau_1) \bm{J}_{\bm{G}} (\bm{y}(\tau_1), \tau_1) \bm{e}^1_{\bm{y},1} + \mathcal{O}((\bm{e}^1_{\bm{y},1})^2).
        \end{split}
    \end{equation*}
    Reformulating the equation yields the system
    \begin{equation*}
        \begin{split}
            &(\mathbf{I}_{n_d} - \tilde{q}_{1,1} \bm{J}_{\bm{f}}^{\bm{y}} (\bm{y}^0_1, \bm{z}^0_1, \tau_1) - \tilde{q}_{1,1} \bm{J}_{\bm{f}}^{\bm{z}}(\bm{y}^0_1, \bm{z}^0_1, \tau_1) \bm{J}_{\bm{G}} (\bm{y}(\tau_1), \tau_1)) \bm{e}^1_{\bm{y},1} \\
            &\qquad\qquad\qquad\qquad\qquad\qquad\qquad\qquad\qquad= \mathcal{O}(\Delta t^{\min(2, M + 1)}) + \mathcal{O}((\bm{e}^1_{\bm{y},1})^2).
        \end{split}
    \end{equation*}
    We assume that the operator on the left-hand side is invertible and the system can be solved for $\bm{e}^1_{\bm{y},1}$. Using \prettyref{lm:bounded_J_G} and \prettyref{lm:bounded_J_f}, the Jacobians $\bm{J}_{\bm{f}}^{\bm{y}}$, $\bm{J}_{\bm{f}}^{\bm{z}}$ and $\bm{J}_{\bm{G}}$ are bounded and can be estimated as $\mathcal{O}(1)$. Therefore, the inverse of the left-hand side operator can also be estimated as $\mathcal{O}(1)$. We obtain with \prettyref{lm:Q_IE_EE}
    \begin{align}
        \bm{e}^1_{\bm{y},1} = \mathcal{O}(\Delta t^{\min(2, M + 1)}),
    \end{align}
    and thus $\bm{e}^1_{\bm{z},1} = \mathcal{O}(\Delta t^{\min(2, M + 1)})$.

    In the induction hypothesis, we assume for a particular $k < 2M - 1$ that the error estimations $\bm{e}^k_{\bm{y}, 1} = \mathcal{O}(\Delta t^{k + 1})$ and $\bm{e}^k_{\bm{z}, 1} = \mathcal{O}(\Delta t^{k + 1})$ hold. Using \prettyref{eq:ift}, the local error in $\bm{z}$ is expressed as
    \begin{equation}
        \label{eq:LTE_z_ind_step}
        \begin{split}
            \bm{e}^{k + 1}_{\bm{z},1} &= \bm{G}(\bm{y}(\tau_1), \tau_1) - \bm{G}(\bm{y}^{k + 1}_1, \tau_1) = \bm{J}_{\bm{G}} (\bm{y}(\tau_1), \tau_1) \bm{e}^{k + 1}_{\bm{y},1} + \mathcal{O}((\bm{e}^{k + 1}_{\bm{y},1})^2),
        \end{split}
    \end{equation}
    again by expanding $\bm{G}(\bm{y}^{k + 1}_1, \tau_1)$ in a Taylor series around $(\bm{y} (t), t) = (\bm{y}(\tau_1), \tau_1)$ as in \prettyref{eq:LTE_z}. Taking the difference of the evaluated Picard integral equation \prettyref{eq:constrained_picard_integral} at $t = \tau_1$ and the ($k+1$)-th iterate obtained by equation \prettyref{eq:sdc_c_eq1}, the local error in $\bm{y}$ has the form
    \begin{equation}
        \begin{split}
            \bm{e}^{k + 1}_{\bm{y},1} 
            &= \int_{t_0}^{\tau_1} \bm{f}(\bm{y}(s), \bm{z}(s), s)\,\mathrm{ds} - \sum_{j=1}^M q_{1,j} \bm{f}(\bm{y}^k_j, \bm{z}^k_j, \tau_j) \\
            &\quad- \tilde{q}_{1,1} (\bm{f}(\bm{y}^{k + 1}_1, \bm{z}^{k + 1}_1, \tau_1) - \bm{f}(\bm{y}^k_1, \bm{z}^k_1, \tau_1)). \label{eq:base_eq2}
        \end{split}
    \end{equation}
    With the same steps as above and using the induction hypothesis, we obtain
    \begin{equation*}
        \int_{t_0}^{\tau_1} \bm{f}(\bm{y}(s), \bm{z}(s), s)\,\mathrm{ds} - \sum_{j=1}^M q_{1,j} \bm{f}(\bm{y}^k_j, \bm{z}^k_j, \tau_j) = \mathcal{O}(\Delta t^{\min(k + 2, M + 1)})
    \end{equation*}
    and the error equation \prettyref{eq:base_eq2} becomes
    \begin{equation*}
        \bm{e}^{k + 1}_{\bm{y},1} = \mathcal{O}(\Delta t^{\min(k + 2, M + 1)}) - \tilde{q}_{1,1} (\bm{f}(\bm{y}^{k + 1}_1, \bm{z}^{k + 1}_1, \tau_1) - \bm{f}(\bm{y}^k_1, \bm{z}^k_1, \tau_1)).
    \end{equation*}
    Expanding $\bm{f}(\bm{y}^{k+1}_1, \bm{z}^{k+1}_1, \tau_1)$ in a second-order Taylor series around the point $(y(t), z(t), t)=(\bm{y}^k_1, \bm{z}^k_1, \tau_1)$, the error modifies to
    \begin{equation}
        \begin{split}
            \bm{e}^{k + 1}_{\bm{y},1} &= \mathcal{O}(\Delta t^{\min(k + 2, M + 1)}) - \tilde{q}_{1,1} \bm{J}_{\bm{f}}^{\bm{y}} (\bm{y}^k_1, \bm{z}^k_1, \tau_1) (\bm{y}^{k + 1}_1 - \bm{y}^k_1) \\
            &\quad- \tilde{q}_{1,1} \bm{J}_{\bm{f}}^{\bm{z}} (\bm{y}^k_1, \bm{z}^k_1, \tau_1) (\bm{z}^{k + 1}_1 - \bm{z}^k_1) + \mathcal{O}((\bm{y}^{k + 1}_1 - \bm{y}^k_1)^2 + (\bm{z}^{k + 1}_1 - \bm{z}^k_1)^2)).
        \end{split} \label{eq:error_Taylor_ind_step}
    \end{equation}
    Adding zero terms in $\bm{y}^{k + 1}_1 - \bm{y}^k_1$ and $\bm{z}^{k + 1}_1 - \bm{z}^k_1$ and using the induction hypothesis, we simply get for \prettyref{eq:error_Taylor_ind_step}
    \begin{equation*}
        \begin{split}
            \bm{e}^{k + 1}_{\bm{y},1} &= \mathcal{O}(\Delta t^{\min(k + 2, M + 1)}) + \tilde{q}_{1,1} \bm{J}_{\bm{f}}^{\bm{y}} (\bm{y}^k_1, \bm{z}^k_1, \tau_1) \bm{e}^{k + 1}_{\bm{y},1} + \tilde{q}_{1,1} \bm{J}_{\bm{f}}^{\bm{z}} (\bm{y}^k_1, \bm{z}^k_1, \tau_1) \bm{e}^{k + 1}_{\bm{z},1}\\
            &\quad+ \mathcal{O}((\bm{e}^{k + 1}_{\bm{y},1})^2 + (\bm{e}^{k + 1}_{\bm{z},1})^2)
        \end{split}
    \end{equation*}
    similar to \prettyref{eq:error_Taylor_simplify}.
    Substituting $\bm{e}^{k + 1}_{\bm{z},1}$ from \prettyref{eq:LTE_z_ind_step} and reformulating, we obtain a system for $\bm{e}^{k + 1}_{\bm{y},1}$ given by
    \begin{equation}
        \begin{split}
            &(\mathbf{I}_{n_d} - \tilde{q}_{1,1} \bm{J}_{\bm{f}}^{\bm{y}} (\bm{y}^k_1, \bm{z}^k_1, \tau_1) - \tilde{q}_{1,1} \bm{J}_{\bm{f}}^{\bm{z}} (\bm{y}^k_1, \bm{z}^k_1, \tau_1) \bm{J}_{\bm{G}} (\bm{y}(\tau_1), \tau_1)) \bm{e}^{k + 1}_{\bm{y},1} \\
            &\qquad\qquad\qquad\qquad\qquad\qquad= \mathcal{O}(\Delta t^{\min(k + 2, M + 1)}) + \mathcal{O}((\bm{e}^{k + 1}_{\bm{y},1})^2).
        \end{split} \label{eq:linear_system_err_y}
    \end{equation}
    We assume that the operator on the left-hand side of the system \prettyref{eq:linear_system_err_y} is invertible. Using \prettyref{lm:bounded_J_G} and \prettyref{lm:bounded_J_f}, the Jacobians $\bm{J}_{\bm{f}}^{\bm{y}}$, $\bm{J}_{\bm{f}}^{\bm{z}}$, and $\bm{J}_{\bm{G}}$ are bounded and can be estimated as $\mathcal{O}(1)$. The inverse of the operator is therefore estimated as $\mathcal{O}(1)$. Using \prettyref{lm:Q_IE_EE}, we obtain
    \begin{align*}
        \bm{e}^{k + 1}_{\bm{y},1} = \mathcal{O}(\Delta t^{\min(k + 2, M + 1)}) \quad \text{and} \quad \bm{e}^{k + 1}_{\bm{z},1} = \mathcal{O}(\Delta t^{\min(k + 2, M + 1)}).
    \end{align*}
\end{proof}

\begin{theorem}
    \label{thm:lte_sdc_c}
    Under the assumptions of \prettyref{thm:lte_initial_SDC_C} the numerical solutions in iteration $k$ at arbitrary node $\tau_m$ obtained by the \texttt{SDC-C} scheme satisfy
    \begin{align*}
        \bm{e}^k_{\bm{y},m} = \mathcal{O}(\Delta t^{k + 1}) \quad \text{and} \quad \bm{e}^k_{\bm{z},m} = \mathcal{O}(\Delta t^{k + 1}).
    \end{align*}
    for $m = 1, \dots ,M$ and $k=0,1,..,2M-1$.
\end{theorem}

\begin{proof}[by induction]
    \prettyref{thm:lte_initial_SDC_C} and \prettyref{thm:lte_iters_first} provide the base case. In the induction hypothesis, we assume that
    \begin{align*}
        \bm{e}^k_{\bm{y},m} &= \mathcal{O}(\Delta t^{k + 1}),\quad \bm{e}^k_{\bm{z},m} = \mathcal{O}(\Delta t^{k + 1})\quad \text{for all }m, \tag{IH1} \label{eq:IH1}\\
        \bm{e}^{k + 1}_{\bm{y},\ell} &= \mathcal{O}(\Delta t^{k + 2}),\quad \bm{e}^{k + 1}_{\bm{z},\ell} = \mathcal{O}(\Delta t^{k + 2})\quad \text{with }\ell < m, \tag{IH2} \label{eq:IH2}
    \end{align*}
    for $m = 1,..,M$ and $k \le 2M - 1$.

    We consider the errors in iteration $k + 1$. As in \prettyref{eq:LTE_z}, the local error in $\bm{z}$ is given by
    \begin{equation}
        \begin{split}
            \bm{e}^{k + 1}_{\bm{z},m} 
            &= \bm{J}_{\bm{G}} (\bm{y}(\tau_m), \tau_m) \bm{e}^{k + 1}_{\bm{y},m} + \mathcal{O}((\bm{e}^{k + 1}_{\bm{y},m})^2),
        \end{split} \label{eq:LTE_z_2}
    \end{equation}
    via a Taylor series expansion of $\bm{G}(\bm{y}^{k + 1}_m, \tau_m)$ around $(\bm{y} (t), t) = (\bm{y}(\tau_m), \tau_m)$
    \begin{equation*}
        \begin{split}
            \bm{G}(\bm{y}^{k + 1}_m, \tau_m) 
            &= \bm{G}(\bm{y}(\tau_m), \tau_m) - \bm{J}_{\bm{G}} (\bm{y}(\tau_m), \tau_m) \bm{e}^{k + 1}_{\bm{y},m} + \mathcal{O}((\bm{e}^{k + 1}_{\bm{y},m})^2).
        \end{split}
    \end{equation*}
    Taking the difference of the evaluated Picard integral equation \prettyref{eq:constrained_picard_integral} at $t = \tau_m$ and the ($k+1$)-th iterate obtained by equation \prettyref{eq:sdc_c_eq1}, the local error in $\bm{y}$ has the form
    \begin{equation}
        \begin{split}
            \bm{e}^{k+1}_{\bm{y},m} &= \int_{t_0}^{\tau_m} \bm{f}(\bm{y}(s), \bm{z}(s), s)\,\mathrm{ds} - \sum_{j=1}^M q_{m,j} \bm{f}(\bm{y}^k_j, \bm{z}^k_j, \tau_j) \\
            &\quad- \sum_{j=1}^m \tilde{q}_{m,j} (\bm{f}(\bm{y}^{k + 1}_j, \bm{z}^{k + 1}_j, \tau_j) - \bm{f}(\bm{y}^k_j, \bm{z}^k_j, \tau_j)).
        \end{split} \label{eq:base_eq_2}
    \end{equation}
    The spectral quadrature error can be estimated as in the proof for \prettyref{thm:lte_iters_first}: Adding a zero term to it yields
    \begin{subequations} \label{eq:first_quantity_2}
        \begin{align}
            &\int_{t_0}^{\tau_m} \bm{f}(\bm{y}(s), \bm{z}(s), s)\,\mathrm{ds} - \sum_{j=1}^M q_{m,j} \bm{f}(\bm{y}^k_j, \bm{z}^k_j, \tau_j) \label{eq:first_quantity_2_eq1} \\
            &\qquad\qquad= \left(\int_{t_0}^{\tau_m} \bm{f}(\bm{y}(s), \bm{z}(s), s)\,\mathrm{ds} - \sum_{j=1}^M q_{m,j} \bm{f}(\bm{y}(\tau_j), \bm{z}(\tau_j), \tau_j)\right) \label{eq:first_quantity_2_eq2} \\
            &\qquad\qquad\quad+ \left(\sum_{j=1}^M q_{m,j} \bm{f}(\bm{y}(\tau_j), \bm{z}(\tau_j), \tau_j) - \sum_{j=1}^M q_{m,j} \bm{f}(\bm{y}^k_j, \bm{z}^k_j, \tau_j)\right). \label{eq:first_quantity_2_eq3}
        \end{align}
    \end{subequations}
    The quadrature rule using weights $q_{m, j}$ is interpreted as a numerical approximation of the integral of an interpolating polynomial through the points $(\bm{y}(\tau_j), \bm{z}(\tau_j), \tau_j)$ because the rule only evaluates $\bm{f}$ at the collocation nodes $\tau_j$ for $j=1,..,M$. The integral is therefore computed with error $\mathcal{O}(\Delta t^M)$, i.e.,
    \begin{equation}
        \int_{t_0}^{\tau_m} \bm{f}(\bm{y}(s), \bm{z}(s), s)\,\mathrm{ds} - \sum_{j=1}^M q_{m,j} \bm{f}(\bm{y}(\tau_j), \bm{z}(\tau_j), \tau_j) = \mathcal{O}(\Delta t^{M + 1}). \label{eq:high_order_integral_approximation}
    \end{equation}
    The second error in \prettyref{eq:first_quantity_2_eq3} is estimated as
    \begin{equation*}
        \begin{split}
            \sum_{j=1}^M q_{m,j} \bm{f}(\bm{y}(\tau_j), \bm{z}(\tau_j), \tau_j) &- \sum_{j=1}^M q_{m,j} \bm{f}(\bm{y}^k_j, \bm{z}^k_j, \tau_j) = \mathcal{O}(\Delta t^{k + 2})
        \end{split}
    \end{equation*}
    by expanding $\bm{f}(\bm{y}(\tau_j), \bm{z}(\tau_j), \tau_j)$ in the second-order Taylor series expansion around $(y(t),z(t),t) = (\bm{y}^k_j, \bm{z}^k_j, \tau_j)$
    \begin{equation*}
        \begin{split}
            \bm{f}(\bm{y}(\tau_j), \bm{z}(\tau_j), \tau_j) 
            &= \bm{f}(\bm{y}^k_j, \bm{z}^k_j, \tau_j) + \bm{J}_{\bm{f}}^{\bm{y}} (\bm{y}^k_j, \bm{z}^k_j, \tau_j) \bm{e}^k_{y,j} + \bm{J}_{\bm{f}}^{\bm{z}} (\bm{y}^k_j, \bm{z}^k_j, \tau_j) \bm{e}^k_{\bm{z},j} \\
            &\quad+ \mathcal{O}((\bm{e}^k_{\bm{y},j})^2 + (\bm{e}^k_{\bm{z},j})^2),
        \end{split}
    \end{equation*}
    \prettyref{lm:quad}, \prettyref{lm:bounded_J_f}, and \prettyref{thm:lte_initial_SDC_C} for $j=1,..,M$.
    Therefore, the spectral quadrature error can be estimated as
    \begin{equation}
        \int_{t_0}^{\tau_m} \bm{f}(\bm{y}(s), \bm{z}(s), s)\,\mathrm{ds} - \sum_{j=1}^M q_{m,j} \bm{f}(\bm{y}^k_j, \bm{z}^k_j, \tau_j) = \mathcal{O}\left(\Delta t^{\min(k + 2, M + 1)}\right), \label{eq:quadrature_error_2}
    \end{equation}
    and the error equation becomes
    \begin{equation*}
        \bm{e}^{k + 1}_{\bm{y},m} = \mathcal{O}(\Delta t^{\min(k + 2, M + 1)}) - \sum_{j=1}^m \tilde{q}_{m,j} (\bm{f}(\bm{y}^{k + 1}_j, \bm{z}^{k + 1}_j, \tau_j) - \bm{f}(\bm{y}^k_j, \bm{z}^k_j, \tau_j)).
    \end{equation*}

    For $j = 1,..,m$, we expand $\bm{f}(\bm{y}^{k + 1}_j, \bm{z}^{k + 1}_j, \tau_j)$ in a second-order Taylor series around $(y(t),z(t),t)=(\bm{y}^k_j, \bm{z}^k_j, \tau_j)$ by
    \begin{equation*}
        \begin{split}
            \bm{f}(\bm{y}^{k + 1}_j, \bm{z}^{k + 1}_j, \tau_j) &= \bm{f}(\bm{y}^k_j, \bm{z}^k_j, \tau_j) + \bm{J}_{\bm{f}}^{\bm{y}} (\bm{y}^k_j, \bm{z}^k_j, \tau_j) (\bm{y}^{k + 1}_j - \bm{y}^{k + 1}_j) \\
            &\quad+ \bm{J}_{\bm{f}}^{\bm{z}} (\bm{y}^k_j, \bm{z}^k_j, \tau_j) (\bm{z}^{k + 1}_j - \bm{z}^k_j) \\
            &\quad+ \mathcal{O}((\bm{y}^{k + 1}_j - \bm{y}^k_j)^2 + (\bm{z}^{k + 1}_j - \bm{z}^k_j)^2),
        \end{split}
    \end{equation*}
    and the error modifies to
    \begin{equation}
        \begin{split}
            \bm{e}^{k + 1}_{\bm{y},m} &= \mathcal{O}(\Delta t^{\min(k + 2, M + 1)}) \\
            &\quad- \sum_{j=1}^m \tilde{q}_{m,j} (\bm{J}_{\bm{f}}^{\bm{y}} (\bm{y}^k_j, \bm{z}^k_j, \tau_j) (\bm{y}^{k + 1}_j - \bm{y}^k_j) +  \bm{J}_{\bm{f}}^{\bm{z}} (\bm{y}^k_j, \bm{z}^k_j, \tau_j) (\bm{z}^{k + 1}_j - \bm{z}^k_j)) \\
            &\quad+ \mathcal{O}((\bm{y}^{k + 1}_j - \bm{y}^k_j)^2 + (\bm{z}^{k + 1}_j - \bm{z}^k_j)^2)).
        \end{split} \label{eq:error_Taylor_thm27}
    \end{equation}
    Adding zero terms to $\bm{y}^{k + 1}_j - \bm{y}^k_j$ and $\bm{z}^{k + 1}_j - \bm{z}^k_j$, and using the induction hypothesis, we conclude that
    \begin{equation} \label{eq:zero_terms_2}
        \begin{split}
            \bm{y}^{k + 1}_j - \bm{y}^k_j = \bm{y}^{k + 1}_j - \bm{y}(\tau_j) + \bm{y}(\tau_j) - \bm{y}^k_j = -\bm{e}^{k + 1}_{\bm{y},j} + \mathcal{O}(\Delta t^{k + 1}), \\
            \bm{z}^{k + 1}_j - \bm{z}^k_j = \bm{z}^{k + 1}_j - \bm{z}(\tau_j) + \bm{z}(\tau_j) - \bm{z}^k_j = -\bm{e}^{k + 1}_{\bm{z},j} + \mathcal{O}(\Delta t^{k + 1}).
        \end{split}
    \end{equation}
    Using \prettyref{eq:zero_terms_2} in \prettyref{eq:error_Taylor_thm27}, we obtain
    \begin{equation}
        \begin{split}
            \bm{e}^{k + 1}_{\bm{y},m} 
            &= \mathcal{O}(\Delta t^{\min(k + 2, M + 1)}) \\
            &\quad+ \sum_{j=1}^m \tilde{q}_{m,j} (\bm{J}_{\bm{f}}^{\bm{y}} (\bm{y}^k_j, \bm{z}^k_j, \tau_j) \bm{e}^{k + 1}_{\bm{y},j} + \bm{J}_{\bm{f}}^{\bm{z}} (\bm{y}^k_j, \bm{z}^k_j, \tau_j) \bm{e}^{k + 1}_{\bm{z},j}) \\
            &\quad+ \sum_{j=1}^m \mathcal{O}((\bm{e}^{k + 1}_{\bm{y},j})^2 + (\bm{e}^{k + 1}_{\bm{z},j})^2)
        \end{split} \label{eq:err_eq_thm3}
    \end{equation}
    via \prettyref{lm:Q_IE_EE}, \prettyref{lm:bounded_J_f}, and \prettyref{thm:lte_initial_SDC_C}.
    Substituting $\bm{e}^{k + 1}_{\bm{z},j}$ from \prettyref{eq:LTE_z_2} (for $m = j$) into \prettyref{eq:err_eq_thm3}, the obtained local error in $\bm{y}$ becomes independent of the local error in $\bm{z}$. It is given by
    \begin{equation*}
        \begin{split}
            \bm{e}^{k + 1}_{\bm{y},m} &= \mathcal{O}(\Delta t^{\min(k + 2, M + 1)}) + \sum_{j=1}^m \tilde{q}_{m,j} \bm{J}_{\bm{f}}^{\bm{y}} (\bm{y}^k_j, \bm{z}^k_j, \tau_j) \bm{e}^{k + 1}_{\bm{y},j} \\
            &\quad+ \sum_{j=1}^m \tilde{q}_{m,j} \bm{J}_{\bm{f}}^{\bm{z}} (\bm{y}^k_j, \bm{z}^k_j, \tau_j) \bm{J}_{\bm{G}} (\bm{y}(\tau_j), \tau_j) \bm{e}^{k + 1}_{\bm{y},j} + \sum_{j=1}^m \mathcal{O}((\bm{e}^{k + 1}_{\bm{y},j})^2).
        \end{split}
    \end{equation*}
    Arranging the first $m-1$ terms of the sums gives
    \begin{equation*}
        \begin{split}
            \bm{e}^{k + 1}_{\bm{y},m} &= \mathcal{O}(\Delta t^{\min(k + 2, M + 1)}) \\
            &\quad + \sum_{j=1}^{m - 1} \tilde{q}_{m,j} (\bm{J}_{\bm{f}}^{\bm{y}} (\bm{y}^k_j, \bm{z}^k_j, \tau_j) + \bm{J}_{\bm{f}}^{\bm{z}} (\bm{y}^k_j, \bm{z}^k_j, \tau_j) \bm{J}_{\bm{G}} (\bm{y}(\tau_j), \tau_j)) \bm{e}^{k + 1}_{\bm{y},j} \\
            &\quad + \tilde{q}_{m,m} (\bm{J}_{\bm{f}}^{\bm{y}} (\bm{y}^k_m, \bm{z}^k_m, \tau_m) + \bm{J}_{\bm{f}}^{\bm{z}} (\bm{y}^k_m, \bm{z}^k_m, \tau_m) \bm{J}_{\bm{G}} (\bm{y}(\tau_m), \tau_m)) \bm{e}^{k + 1}_{\bm{y},m} \\
            &\quad+ \sum_{j=1}^{m-1} \mathcal{O}((\bm{e}^{k + 1}_{\bm{y},j})^2) + \mathcal{O}((\bm{e}^{k + 1}_{\bm{y},m})^2).
        \end{split}
    \end{equation*}
    The induction hypothesis \prettyref{eq:IH2} implies that the first $m - 1$ summands are even of order $\mathcal{O}(\Delta t^{k + 3})$, i.e.,
    \begin{equation*}
        \begin{split}
            &\sum_{j=1}^{m - 1} \tilde{q}_{m,j} (\bm{J}_{\bm{f}}^{\bm{y}} (\bm{y}^k_j, \bm{z}^k_j, \tau_j) + \bm{J}_{\bm{f}}^{\bm{z}} (\bm{y}^k_j, \bm{z}^k_j, \tau_j) \bm{J}_{\bm{G}} (\bm{y}(\tau_j), \tau_j)) \bm{e}^{k + 1}_{\bm{y},j}  = \mathcal{O}(\Delta t^{k + 3}),
        \end{split}
    \end{equation*}
    together with \prettyref{lm:Q_IE_EE}, \prettyref{lm:bounded_J_G} and \prettyref{lm:bounded_J_f}. Thus, we obtain
    \begin{equation*}
        \begin{split}
            \bm{e}^{k + 1}_{\bm{y},m} &= \tilde{q}_{m,m} \bm{J}_{\bm{f}}^{\bm{y}} (\bm{y}^k_m, \bm{z}^k_m, \tau_m) \bm{e}^{k + 1}_{\bm{y},m} + \tilde{q}_{m,m} \bm{J}_{\bm{f}}^{\bm{z}} (\bm{y}^k_m, \bm{z}^k_m, \tau_m) \bm{J}_{\bm{G}} (\bm{y}(\tau_m), \tau_m) \bm{e}^{k + 1}_{\bm{y},m} \\
            &\quad +\mathcal{O}(\Delta t^{\min(k + 2, M + 1)}) + \mathcal{O}((\bm{e}^{k + 1}_{\bm{y},m})^2)
        \end{split}
    \end{equation*}
    resulting in the system
    \begin{equation}
        \begin{split}
            &(\mathbf{I}_{n_d} - \tilde{q}_{m,m} \bm{J}_{\bm{f}}^{\bm{y}} (\bm{y}^k_m, \bm{z}^k_m, \tau_m) - \tilde{q}_{m,m} \bm{J}_{\bm{f}}^{\bm{z}} (\bm{y}^k_m, \bm{z}^k_m, \tau_m) \bm{J}_{\bm{G}} (\bm{y}(\tau_m), \tau_m)) \bm{e}^{k + 1}_{\bm{y},m} \\
            &\qquad\qquad\qquad\qquad= \mathcal{O}(\Delta t^{\min(k + 2, M + 1)}) + \mathcal{O}(( \bm{e}^{k + 1}_{\bm{y},m})^2).
        \end{split} \label{eq:o_system}
    \end{equation}
    Using \prettyref{lm:bounded_J_G} and \prettyref{lm:bounded_J_f}, the Jacobians $\bm{J}_{\bm{f}}^{\bm{y}}$, $\bm{J}_{\bm{f}}^{\bm{z}}$ and $\bm{J}_{\bm{G}}$ are bounded. Thus, the inverse of the operator on the left-hand side in \prettyref{eq:o_system} is estimated as $\mathcal{O}(1)$. Using \prettyref{lm:Q_IE_EE}, we deduce
    \begin{align*}
        \bm{e}^{k + 1}_{\bm{y},m} = \mathcal{O}(\Delta t^{\min(k + 2, M + 1)})
    \end{align*}
    and thus $\bm{e}^{k + 1}_{\bm{z},m} = \mathcal{O}(\Delta t^{\min(k + 2, M + 1)})$.
\end{proof}

\begin{figure}[!t]
    \centering
    \includegraphics[width=1.105\textwidth]{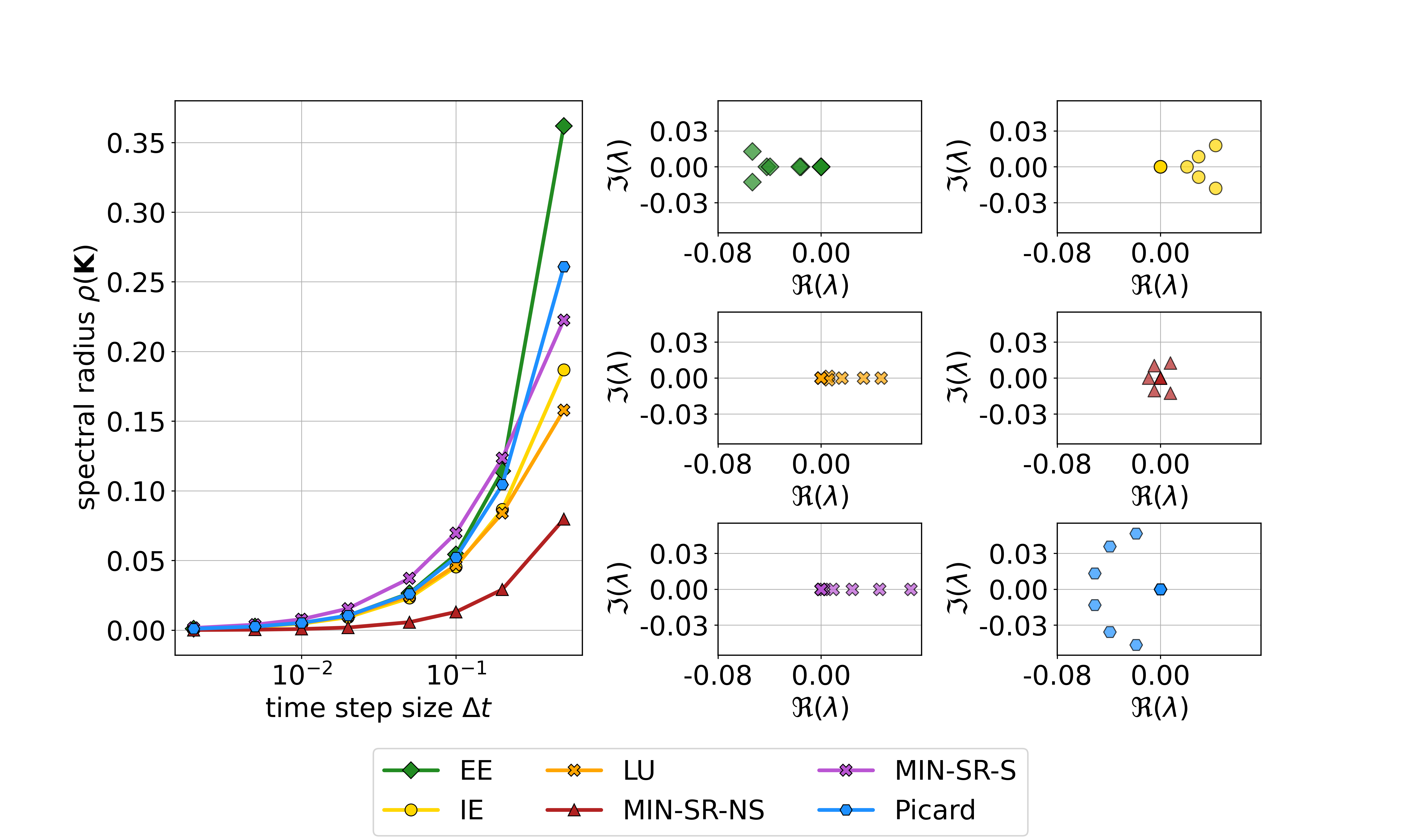}
    \caption{Spectral radius and eigenvalue distributions of iteration matrix of different \texttt{SDC-C} variants for the linear problem \prettyref{eq:linear_problem}. The schemes are based on $M = 6$ Radau IIA nodes. Left: Spectral radius against different time step sizes $\Delta t$. Right: Eigenvalue distributions (real part versus imaginary part) for $\Delta t = 0.01$.}
    \label{fig:fig2}
\end{figure}

\prettyref{lm:Q_IE_EE} states that the coefficients of $\bm{Q}_\Delta^{\texttt{IE}}$ and $\bm{Q}_\Delta^{\texttt{EE}}$ scale linearly with the time step size, but the result also carries over to the coefficients of $\bm{Q}_\Delta^{\texttt{MIN-SR-NS}}$ and $\bm{Q}_\Delta^{\texttt{Picard}}$. Therefore, the result of \prettyref{thm:lte_sdc_c} holds for \texttt{MIN-SR-NS} and \texttt{Picard} coefficients as well.

\begin{remark}
    The statement of \prettyref{thm:lte_sdc_c} can also be shown for \texttt{IMEX-SDC-C} \prettyref{eq:imex_sdc_c} with minor changes. In \prettyref{lm:bounded_J_f} and \prettyref{thm:lte_initial_SDC_C}, we require $\bm{f}_\mathrm{im}$ and $\bm{f}_\mathrm{ex}$ be $C^2$ functions and the proofs of \prettyref{lm:bounded_J_f} and \prettyref{thm:lte_initial_SDC_C} apply directly to the modified case. In order to show the statement of \prettyref{thm:lte_iters_first}, for \prettyref{eq:imex_constrained_picard_integral}, we can use the fact that $\bm{f} = \bm{f}_\mathrm{im} + \bm{f}_\mathrm{ex}$. Therefore, the high-order approximation of the integral in Picard's formulation is of order $\mathcal{O}(\Delta t^{M + 1})$ as in \prettyref{eq:high_order_integral_approximation}. For \texttt{IMEX-SDC-C}, \prettyref{thm:lte_iters_first} holds, but the explicit quadrature term corresponding to $q^\mathrm{ex}_{11}$ vanishes because the diagonal elements of the matrix $\bm{Q}_\Delta^\texttt{ex}$ are zero. It might be useful to show that the statement also holds for the second collocation node $\tau_2$, i.e.,
    \begin{equation}
        \bm{e}^k_{\bm{y},2} = \mathcal{O}(\Delta t^{k + 1}) \quad \text{and} \quad \bm{e}^k_{\bm{z},2} = \mathcal{O}(\Delta t^{k + 1})
    \end{equation}
    for $k = 0,1,..,2M - 1$, where the explicit quadrature term must be taken into account.
\end{remark}

The analysis of the index one case above can also be performed for DAEs of higher index. Here, some more assumptions must be taken into account \cite{Hairer1989}, \cite{Hairer_stiff2010}. For example, consider the semi-explicit DAE of index two of the specific form
\begin{equation}
    \bm{y}' (t) = \bm{f} (\bm{y}(t), \bm{z}(t), t), \qquad \qquad \bm{0} = \bm{g} (\bm{y}(t), t). \label{eq:semiexplicit_dae_index_two}
\end{equation}
Differentiating the algebraic constraints yields the hidden constraints
\begin{equation}
    \bm{0} = \bm{f}(\bm{y}(t), \bm{z}(t), t) \bm{J}_{\bm{g}}^{\bm{y}}(\bm{y}(t)),
\end{equation}
and the assumption of a bounded invertible Jacobian $\bm{J}^{\bm{z}}_{g}$ is insufficient. Instead, we need to assume that the Jacobian $\bm{J}^{\bm{y}}_{\bm{g}} \bm{J}^{\bm{z}}_{\bm{f}}$ is invertible and bounded. Stronger assumptions are also needed for problems of an index greater than two. An analysis of higher index problems is left for future.
    
\section{Numerical results} \label{sec:numerical_results}
In order to evaluate the performance and efficiency of \texttt{SDC-C} we examine the method using different choices of the $\bm{Q}_\Delta$ matrix introduced in \prettyref{sec:sdc_iter_method} in three test cases: A linear test DAE, the nonlinear problem describing Andrews' squeezing mechanism, and a reaction-diffusion problem. In particular, we investigate the scheme using diagonal matrices $\bm{Q}_\Delta^{\texttt{MIN-SR-NS}}$ and $\bm{Q}_\Delta^{\texttt{MIN-SR-S}}$, which are specifically designed to parallelize SDC across the method. Simulations have been performed with the Python implementation \texttt{pySDC} \cite{Speck2025} using \texttt{MPI} for distributed memory parallelism with \texttt{mpi4py=4.0.3} \cite{Dalcin2011}. The \texttt{SDC-C} method is compared to the variants \texttt{FI-SDC} and \texttt{SI-SDC}, see \prettyref{sec:sdc_dae}. All SDC variants are used with Radau IIA nodes. Instead of using the residual, the increment is monitored. We denote the solution as converged if the increment drops below an error tolerance $\etol$, i.e.,
\begin{equation}
    ||\bm{u}^{\tilde{k} + 1} - \bm{u}^{\tilde{k}}|| < \etol,
\end{equation}
where $\tilde{k}$ denotes the iteration number at which the solution is converged. For each problem, the error tolerance for each problem is set differently. The vector $\bm{u}^k_{M,t}$ defines the numerical solution of any unknown $\bm{u}$ after iteration $k$ at last collocation node $\tau_M$ at a time $t$. Note that $\tau_M = t$, the numerical solution is thus the one to the next time step. 


The comparison includes Radau IIA methods \texttt{RadauIIA5} and \texttt{RadauIIA7} of order $5$ and $7$, as they are common used \textit{methods} when solving DAEs. Radau IIA methods produce high-order numerical solutions, and they do not suffer from order reduction because they are stiffly accurate \cite{Hairer1989}. The class of half-explicit RK methods \cite{Hairer1989} is quite close to the \texttt{SDC-C} method. Thus, we also compare with a half-explicit RK method using the Dormand \& Prince formula \cite{Dormand1980} denoted by \texttt{DOPRI5} being a suitable method in this setting \cite{Hairer1989}. The computations were run on one compute node of the PLEIADES cluster at the University of Wuppertal, where we have used modules \texttt{GCC/12.3.0}, \texttt{Python/3.11.3}, and \texttt{OpenMPI/4.1.5}. More details about hardware and software are listed in \cite{PleiadesCluster}. For parallel solvers, $M$ \texttt{MPI} processes are used, where one process is assigned to one collocation node. The used number of nodes $M$ is specified in each case. The experiments are done for the entire time interval for different time step sizes if not explicitly mentioned. We study the different methods using the same time step sizes, i.e., they do not differ between the schemes. The study includes the accuracy of the numerical solution computed by the different methods, where we consider the $L_\infty$ error for the linear problem and the reaction-diffusion problem, that defines the errors on all variables and timepoints. For Andrews' squeezer, we consider the error in $\bm{q}$ at end time $T$. Computational costs are measured in terms of wall-clock times.

\begin{figure}[!t]
    \centering
    \includegraphics[width=\textwidth]{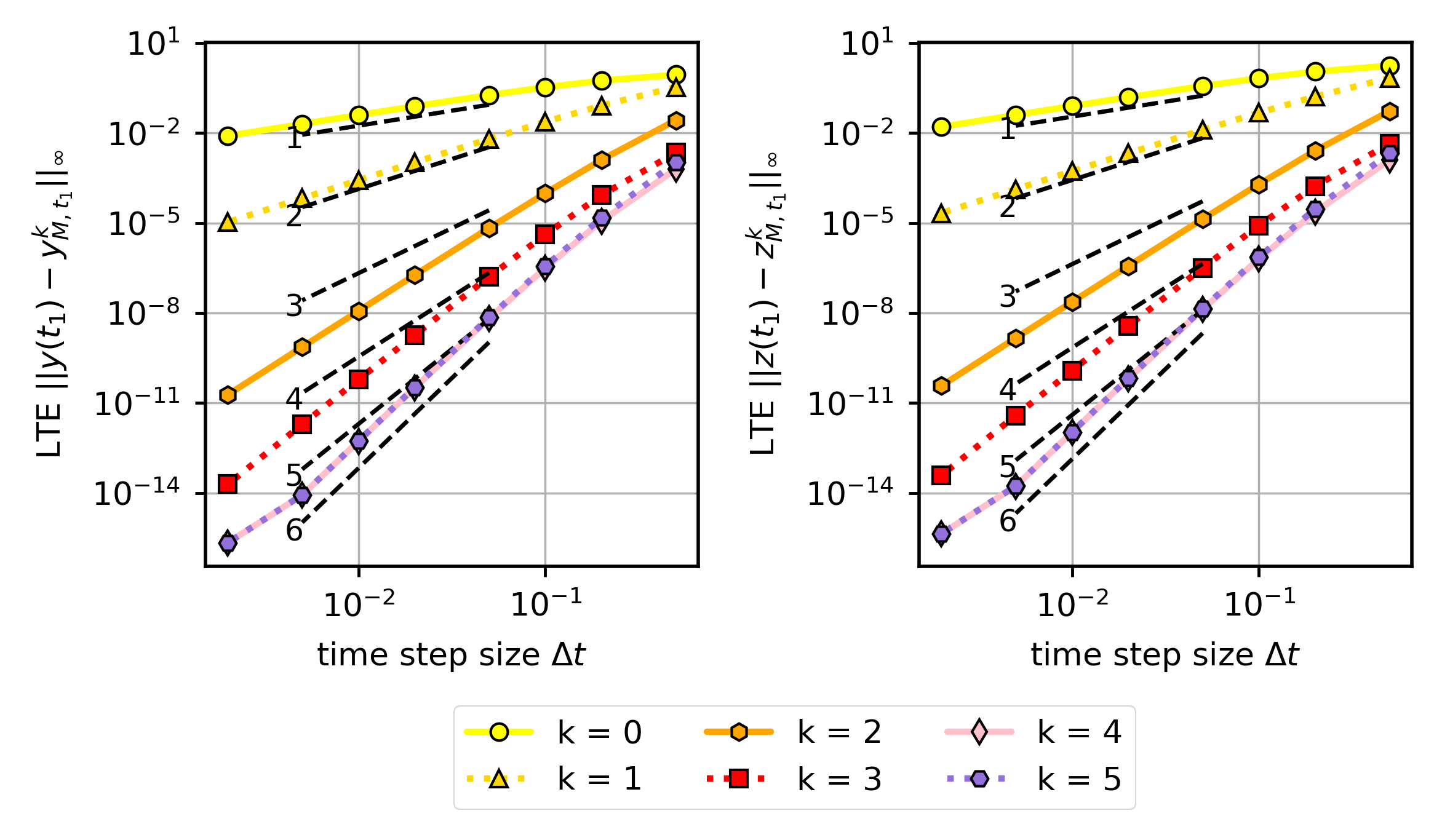}
    \caption{Local truncation error in $y$ and $z$ against time step sizes $\Delta t$ for each iteration $k = 0,\dots,2M - 1$. Orders of accuracy are shown for \texttt{SDC-C-MIN-SR-NS} based on $M = 3$ nodes for the linear problem \prettyref{eq:linear_problem}. Black dashed lines indicate the reference order. Left: Error in $y$. Right: Error in $z$.}
    \label{fig:fig3}
\end{figure}

\subsection{Linear problem} \label{sec:linear_problem}
First, we study the linear problem
\begin{equation}
    \begin{split}
        y' (t) &= -2 y(t) + z(t), \\
        0 &= -2 y(t) - z(t)
    \end{split} \label{eq:linear_problem}
\end{equation}
for scalar functions $y(t), z(t) \in \mathbb{R}$. The problem is studied for the time interval $[0, 1]$ with initial conditions $(y_0, z_0) = (1, -2)$, where
\begin{equation*}
        y(t) = e^{-4 t}, \qquad z(t) = -2 e^{-4 t}
\end{equation*}
are exact solutions of the linear problem. The linear implicit system at each node is solved directly. The increment tolerance is set to $\etol = 10^{-12}$.

\begin{figure}[!t]
    \centering
    \includegraphics[width=\textwidth]{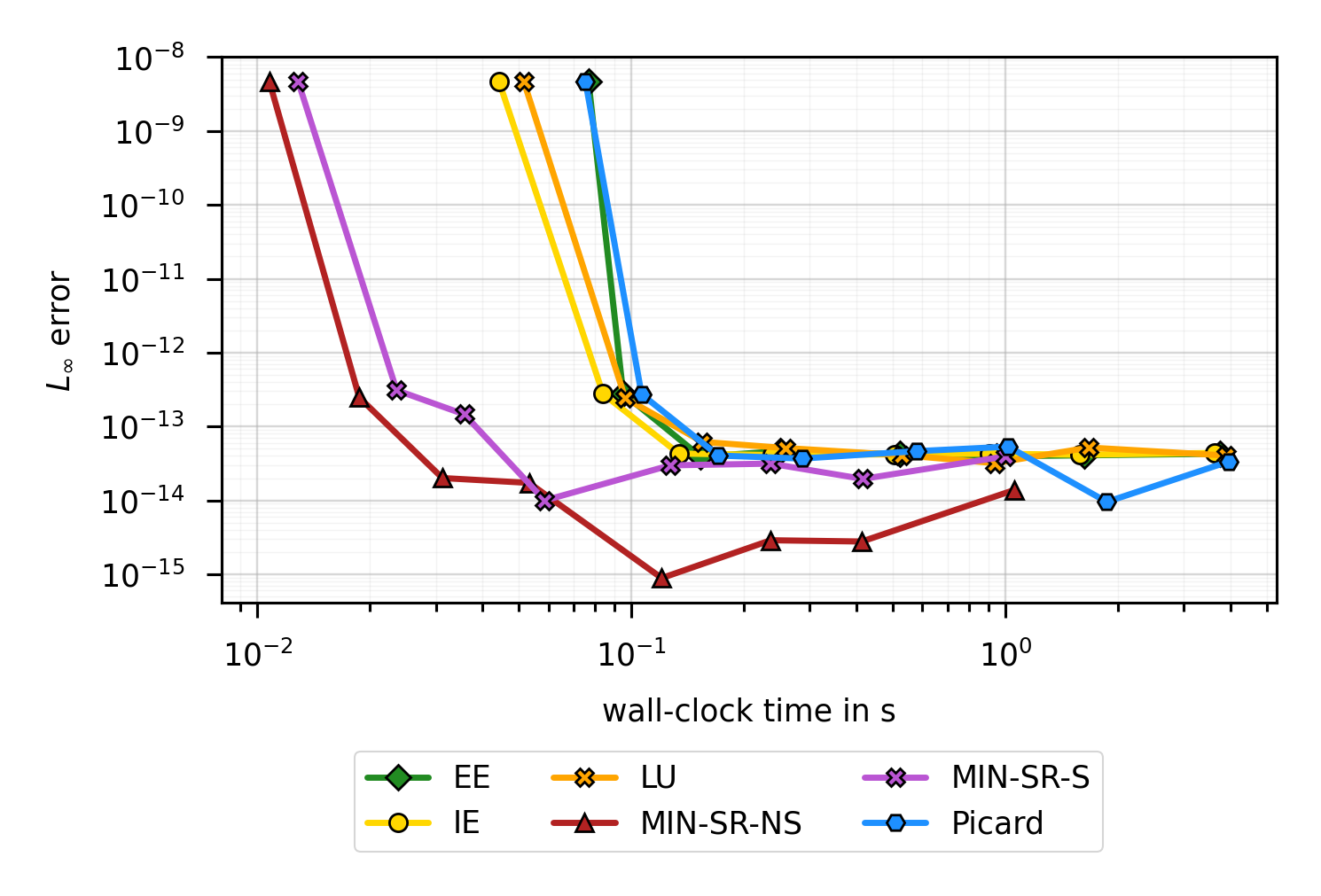}
    \caption{Wall-clock time against $L_{\infty}$ error of \texttt{SDC-C} methods for the linear problem \prettyref{eq:linear_problem}. The schemes are based on $M = 6$ Radau IIA nodes using several choices of $\bm{Q}_\Delta$. All methods used the same time step sizes.}
    \label{fig:fig4}
\end{figure}

For the linear problem, the \texttt{SDC-C} scheme can be written as a linear iterative method with a moderate normal iteration matrix as described in \prettyref{sec:sdc_iter_method}, where we define the iteration matrix $\bm{K}$ as moderate normal if the deviation
\begin{equation}
    d = \lVert \bm{K}\bm{K}^\top - \bm{K}^\top \bm{K}\rVert_\infty
\end{equation}
is not too far from zero. \prettyref{fig:fig2} shows the time step size against the spectral radius (the largest absolute eigenvalue) and the entire eigenvalue distribution of the \texttt{SDC-C} iteration matrix based on $M = 6$ nodes for different matrices $\bm{Q}_\Delta$. The \texttt{SDC-C-Picard} scheme exhibits a smaller spectral radius, implying faster convergence for $\Delta t = 0.01$ than the \texttt{MIN-SR-S} scheme. Studying the eigenvalue distributions for the \texttt{SDC-C-MIN-SR-S} scheme we found that there is only one large eigenvalue in magnitude in the distribution and all remaining staying closer to the origin. In contrast, the eigenvalues for the \texttt{Picard} iteration form a half circle around the origin with equal distance to it. Numerical experiments confirm that the numerical solution converges faster when using \texttt{SDC-C} with \texttt{MIN-SR-S} coefficients. All eigenvalues of the \texttt{MIN-SR-NS} scheme are close to the origin, indicating the fastest convergence compared to all other methods.

As mentioned in \prettyref{rem:convergence_alg_const} \texttt{FI-SDC} leads to errors in the algebraic constraints. Furthermore, the non-normality of \texttt{FI-SDC}'s iteration matrices renders the approach chosen for analysis here infeasible. Thus, the spectral radii and corresponding eigenvalue distributions do not provide any information about convergence.

The \texttt{SDC-C-MIN-SR-NS} method shows excellent convergence behavior because of the smallest spectral radius, and all eigenvalues of the corresponding iteration matrix are clustered around zero, see \prettyref{fig:fig2}. This is expected because the differential equation represents a non-stiff component in the semi-explicit linear DAE problem.

\begin{figure}[!t]
    \centering
    \includegraphics[width=\textwidth]{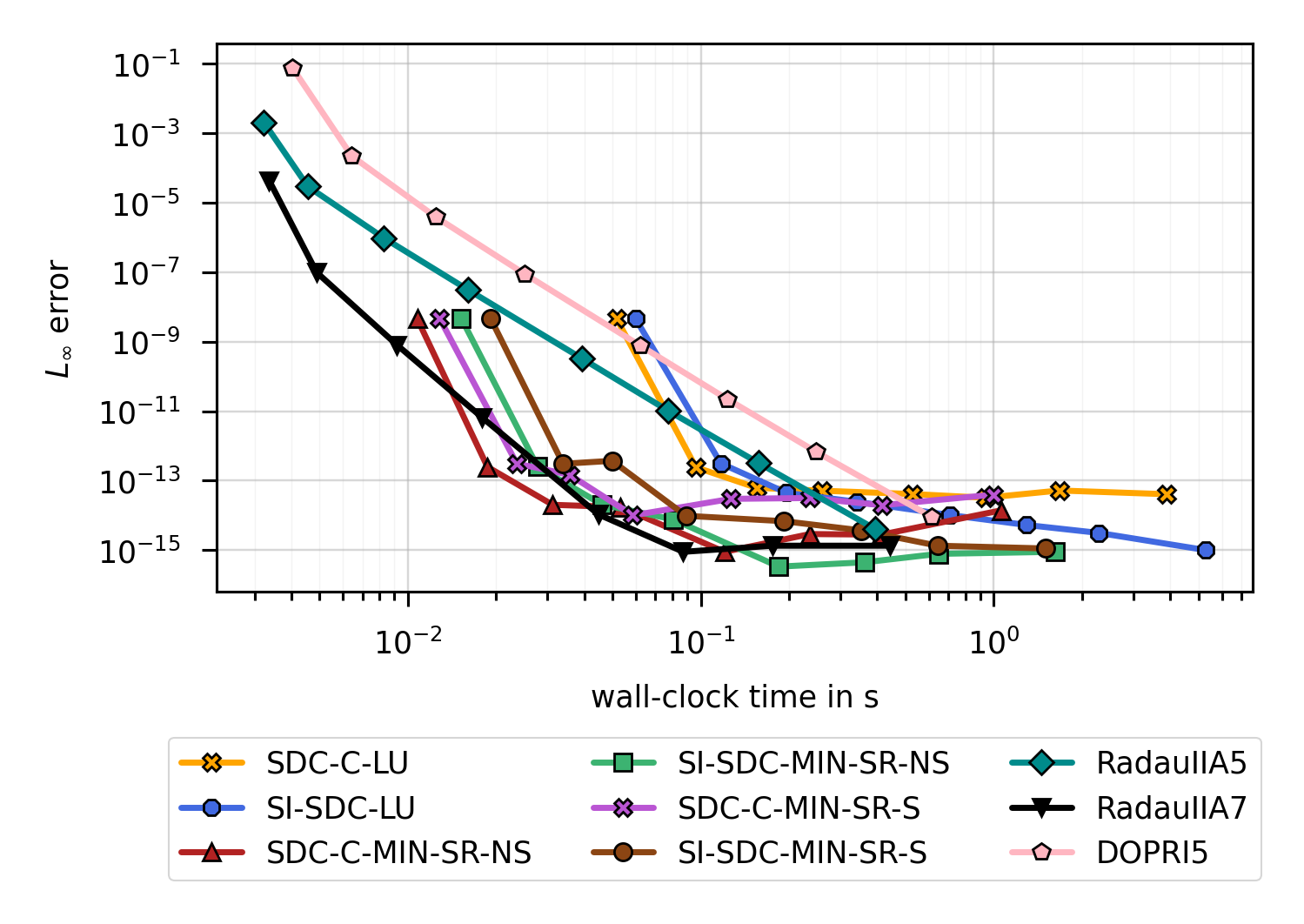}
    \caption{Wall-clock time versus $L_{\infty}$ error of different SDC variants and RK methods for the linear problem \prettyref{eq:linear_problem}. The SDC schemes are based on $M = 6$ using \texttt{LU}, \texttt{MIN-SR-NS}, and \texttt{MIN-SR-S} preconditioning. All methods used the same time step sizes.}
    \label{fig:fig5}
\end{figure}

\prettyref{fig:fig3} shows the time step sizes against the local truncation error (LTE) of the \texttt{SDC-C} method using \texttt{MIN-SR-NS} coefficients after each iteration $k$ as predicted by \prettyref{thm:lte_sdc_c}. After $k = 1$ iteration, the numerical solutions gain even two orders. Therefore, the maximum order is achieved after four iterations. Numerical experiments have shown that an order jump of two is achieved after $k = M - 1$ iterations, but the authors do not have a theoretical explanation. The observation was already made in \cite{Caklovic2025} and carries over to the DAE case studied here.

In \prettyref{fig:fig4}, the $L_\infty$ error against the wall-clock time of different \texttt{SDC-C} methods based on $M = 6$ nodes is shown. The parallel schemes are significantly faster than the serial \texttt{SDC-C} schemes. For time step sizes $\Delta t = 0.5, 0.2, 0.1, 0.05$, \texttt{SDC-C} using \texttt{MIN-SR-NS} coefficients is more efficient than using the \texttt{MIN-SR-S} coefficients, which is expected because the differential equation is non-stiff and algebraic constraints representing the stiff limit are treated implicitly without numerical integration. For the largest step size, the \texttt{IE} and \texttt{LU} methods perform better. In general, all sequential schemes have a similar runtime. For smaller $\Delta t$, the benefit in runtime allows the parallel methods to use larger time step sizes than the serial methods by maintaining high accuracy. However, this is dependent of the number of nodes $M$ because larger time step sizes result in less precision for smaller $M$. 

\begin{figure}[!t]
    \centering
    \includegraphics[width=\textwidth]{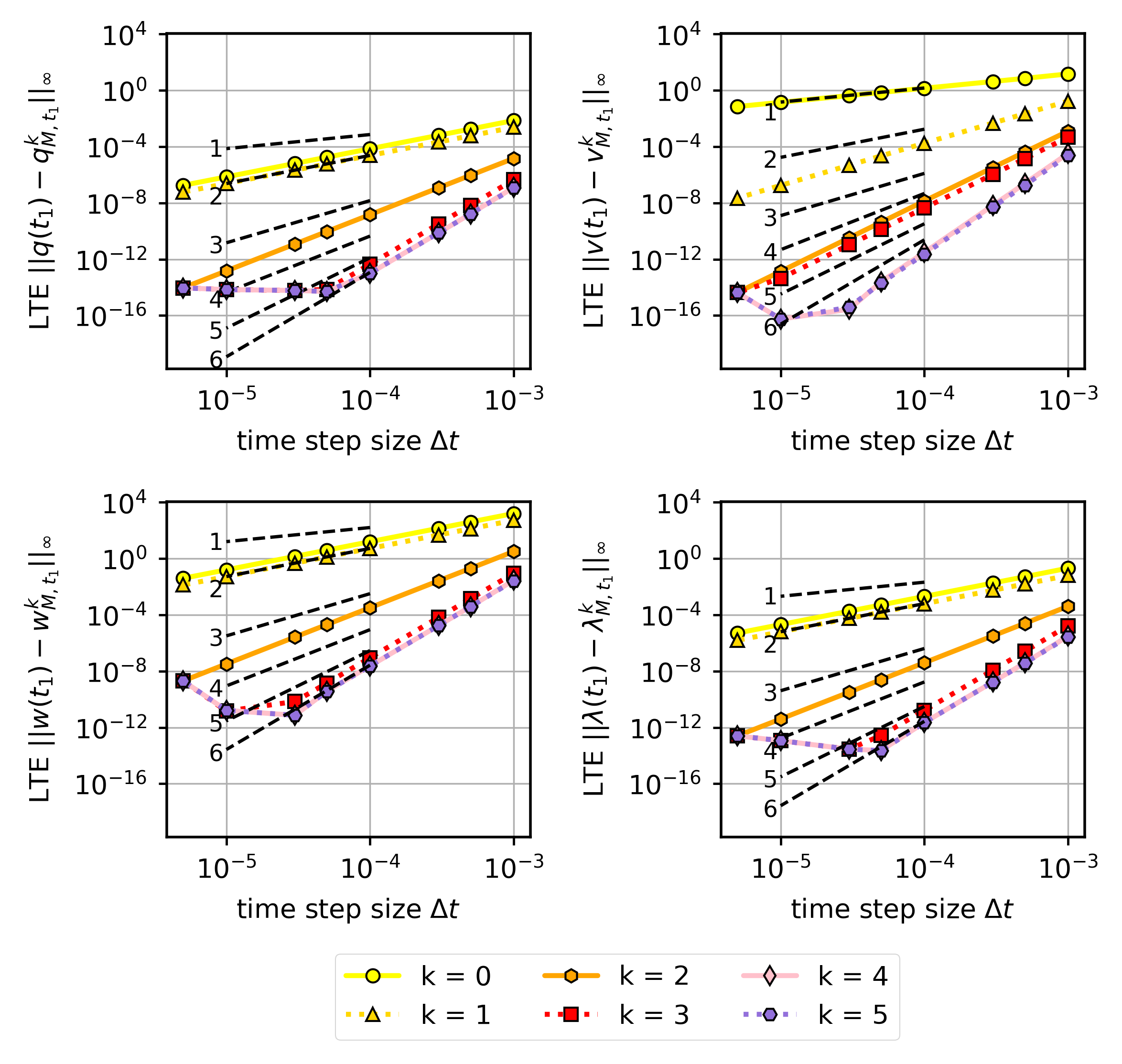}
    \caption{Local truncation error in $\bm{q}$, $\bm{v}$, $\bm{w}$, and $\bm{\lambda}$ against time step sizes $\Delta t$ for each iteration $k = 0,\dots,2M - 1$. Orders of accuracy are shown for \texttt{SDC-C-EE} based on $M = 3$ nodes for Andrews' squeezer \prettyref{eq:andrews_index_one}. Top row: Errors in $\bm{q}$ and $\bm{v}$. Bottom row: Errors in $\bm{w}$ and $\bm{\lambda}$.}
    \label{fig:fig6}
\end{figure}

The comparison of the SDC variants \texttt{SDC-C} and \texttt{SI-SDC} with \texttt{LU}, \texttt{MIN-SR-NS}, and \texttt{MIN-SR-S} preconditioning, respectively, the Radau IIA methods, and the \texttt{DOPRI5} method is shown in \prettyref{fig:fig5}. The parallel SDC variants indicate significantly reduced runtime compared to serial SDC methods. Based on the performance results, \texttt{SDC-C} demonstrates a minor runtime advantage over \texttt{SI-SDC}, suggesting a more cost-efficient formulation. Compared to the tested RK methods, the serial SDC approaches show inferior performance for this particular problem. In contrast, the parallel SDC schemes achieve higher performance than \texttt{RadauIIA5} and \texttt{DOPRI5} in terms of runtime. \texttt{RadauIIA7} remains computationally efficient in this setting, as it computes the solution within each time step by solving a single linear system of dimension $8$. In contrast, SDC methods require the solution of multiple linear systems of dimension $2$ at every collocation node in each iteration, which leads to a significantly higher overall runtime. However, SDC variants offer the advantage of maintaining a higher accuracy even for relatively large time steps, which makes them clearly more favorable than Runge–Kutta methods with respect to accuracy. The numerical solution computed by an SDC method achieves higher accuracy than the solution computed by RK methods with the same time step size.

\begin{figure}[!t]
    \centering
    \includegraphics[width=\textwidth]{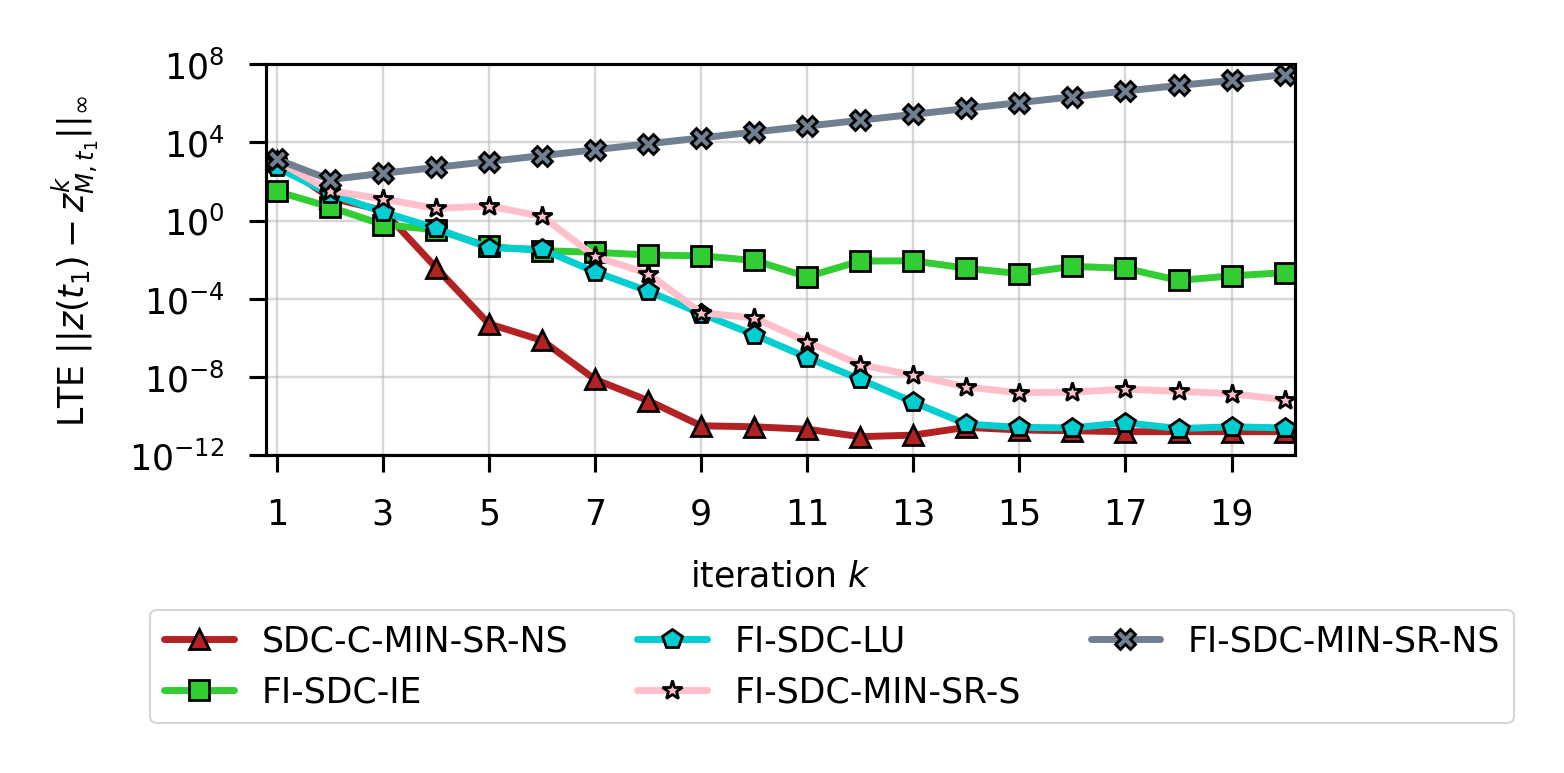}
    \caption{Error in algebraic variables $\bm{z} = (\bm{w}, \bm{\lambda})$ across iterations of \texttt{SDC-C-MIN-SR-NS} and \texttt{FI-SDC} variants using \texttt{IE}, \texttt{LU}, \texttt{MIN-SR-S}, and \texttt{MIN-SR-NS} preconditioning for Andrews' squeezer \prettyref{eq:andrews_index_one} in the first time step of size $\Delta t = 0.001$. The schemes are based on $M = 6$ nodes.}
    \label{fig:fig7}
\end{figure}

\subsection{Andrews' squeezer} \label{sec:andrews_squeezer}
Andrews' squeezing mechanism describes the motions of seven rigid bodies \cite{Andrews1986}. The index-$1$ formulation of the problem is given by
\begin{equation}
    \begin{split}
        \bm{q}' (t) &= \bm{v} (t), \\
        \bm{v}' (t) &= \bm{w} (t), \\
        \bm{0} &= \bm{M}(\bm{q}(t)) \bm{w} (t) - \bm{f}(\bm{q}(t), \bm{v} (t)) + \bm{G}^\top (\bm{q} (t)) \bm{\lambda} (t), \\
        \bm{0} &= \bm{g}_{\bm{qq}} (\bm{q}(t))(\bm{v}(t), \bm{v}(t)) + \bm{G}(\bm{q} (t)) \bm{w} (t)
    \end{split} \label{eq:andrews_index_one}
\end{equation}
with differential variables $\bm{q}(t), \bm{v}(t) \in \mathbb{R}^7$, and algebraic variables $\bm{w}(t) \in \mathbb{R}^7$, $\bm{\lambda}(t) \in \mathbb{R}^6$ for $t \in [0, T]$ with $T = 0.03$. In the original problem formulation of index three, the constraint
\begin{equation}
    \bm{0} = \bm{g}_{\bm{qq}}(\bm{q}(t))(\bm{v}(t), \bm{v}(t)) + \bm{G}(\bm{q} (t)) \bm{w} (t)
\end{equation}
is replaced by
\begin{equation}
    \bm{0} = \bm{g} (\bm{q}(t)) \label{eq:index_three_constraint}
\end{equation}
and the DAE of index one \prettyref{eq:andrews_index_one} is then obtained by differentiating \prettyref{eq:index_three_constraint} twice \cite[Chap.~VII.7]{Hairer_stiff2010}. The setup with the explicit functions and the matrices can be found in the reference just cited.

The implicit system at each collocation node is solved by Newton's method with tolerance $tol_{\mathrm{newton}} = 10^{-14}$. We choose the increment tolerance $\etol = 10^{-9}$.

\prettyref{fig:fig6} shows the order of accuracy after each \texttt{SDC-C-MIN-SR-NS} iteration. In $\bm{q}$, $\bm{w}$, and $\bm{\lambda}$ the provisional solution is second-order accurate. After $k = 1$ and $k = 2$ iterations, the solution gains even two orders. Thus, the maximum order of the SDC scheme (depending on the underlying quadrature rule) is reached after three iterations. This is different to the order observed for $\bm{v}$: The provisional solution is first-order accurate as expected, and the order of the numerical solution jumps by two after $k = 1$ and $k = 3$ iterations. Therefore, the numerical solution achieves the maximum order of accuracy after four iterations. As for the linear problem, the occurrence of order jumps cannot be explained, and no prediction is yet possible for them, but costs are saved to achieve the desired accuracy. 

In \prettyref{fig:fig7}, the global error in the algebraic variables versus the iterations is shown for various SDC schemes in the first time step of size $\Delta t = 0.001$. In \texttt{FI-SDC}, the numerical integration of algebraic variables decelerates the convergence rate, thereby reducing the overall efficiency of the method. In particular, the $\bm{Q}_\Delta^{\texttt{IE}}$ and $\bm{Q}_\Delta^{\texttt{MIN-SR-S}}$ preconditioners lead to noticeably slower convergence. Note that the former matrix is well known to lead to poor convergence properties in the scheme. In contrast, the \texttt{FI-SDC-LU} method appears to effectively mitigate this slowdown. The \texttt{FI-SDC-MIN-SR-NS} method diverges,
which is consistent with what we have shown in \prettyref{rem:alg_stiff_limit}: The numerical integration of algebraic variables represents a stiff limit, and non-stiff treatment leads to a divergent scheme. The SDC schemes based on $\bm{Q}_\Delta^{\texttt{Picard}}$ are not shown here, because the explicit \texttt{FI-SDC-Picard} method leads to a linear system with singular coefficient matrix.

\begin{figure}[!t]
    \centering
    \includegraphics[width=\textwidth]{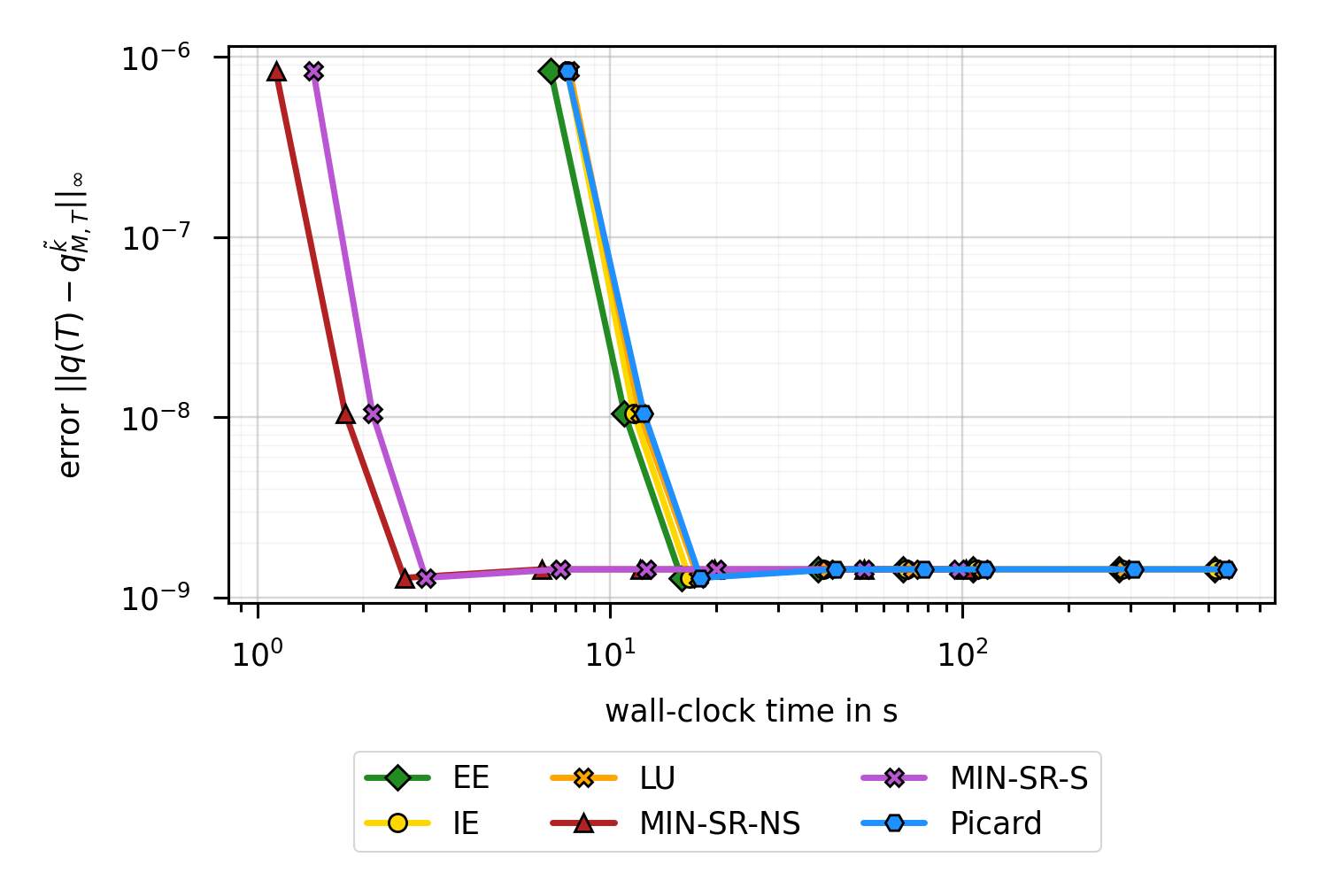}
    \caption{Wall-clock time against maximum absolute error in $\bm{q}$ at end time $T = 0.03$ of different \texttt{SDC-C} methods for Andrews' squeezer \prettyref{eq:andrews_index_one}. The schemes are based on $M = 6$ Radau IIA nodes using several choices of $\bm{Q}_\Delta$. All methods used the same time step sizes.}
    \label{fig:fig8}
\end{figure}

The wall-clock time versus the error in $\bm{q}$ at end time $T$ for different \texttt{SDC-C} methods based on $M = 6$ nodes is shown in \prettyref{fig:fig8}. Both coefficients, \texttt{MIN-SR-NS} and \texttt{MIN-SR-S}, accelerate the \texttt{SDC-C} method with a similar effort where the \texttt{MIN-SR-NS} coefficients are more efficient since it is designed for non-stiff problems and the differential equations are non-stiff. Parallel computation saves computational time and gives an advantage over all serial methods. The explicit \texttt{SDC-C-EE} method outperforms all other serial schemes that can be expected due to the non-stiffness. Obviously, the numerical solution does not gain in accuracy for $\Delta t \le 3 \cdot 10^{-4}$. Therefore, parallel methods can use a larger time step size than the serial methods to compute a solution with high accuracy.

\begin{figure}[!t]
    \centering
    \includegraphics[width=\textwidth]{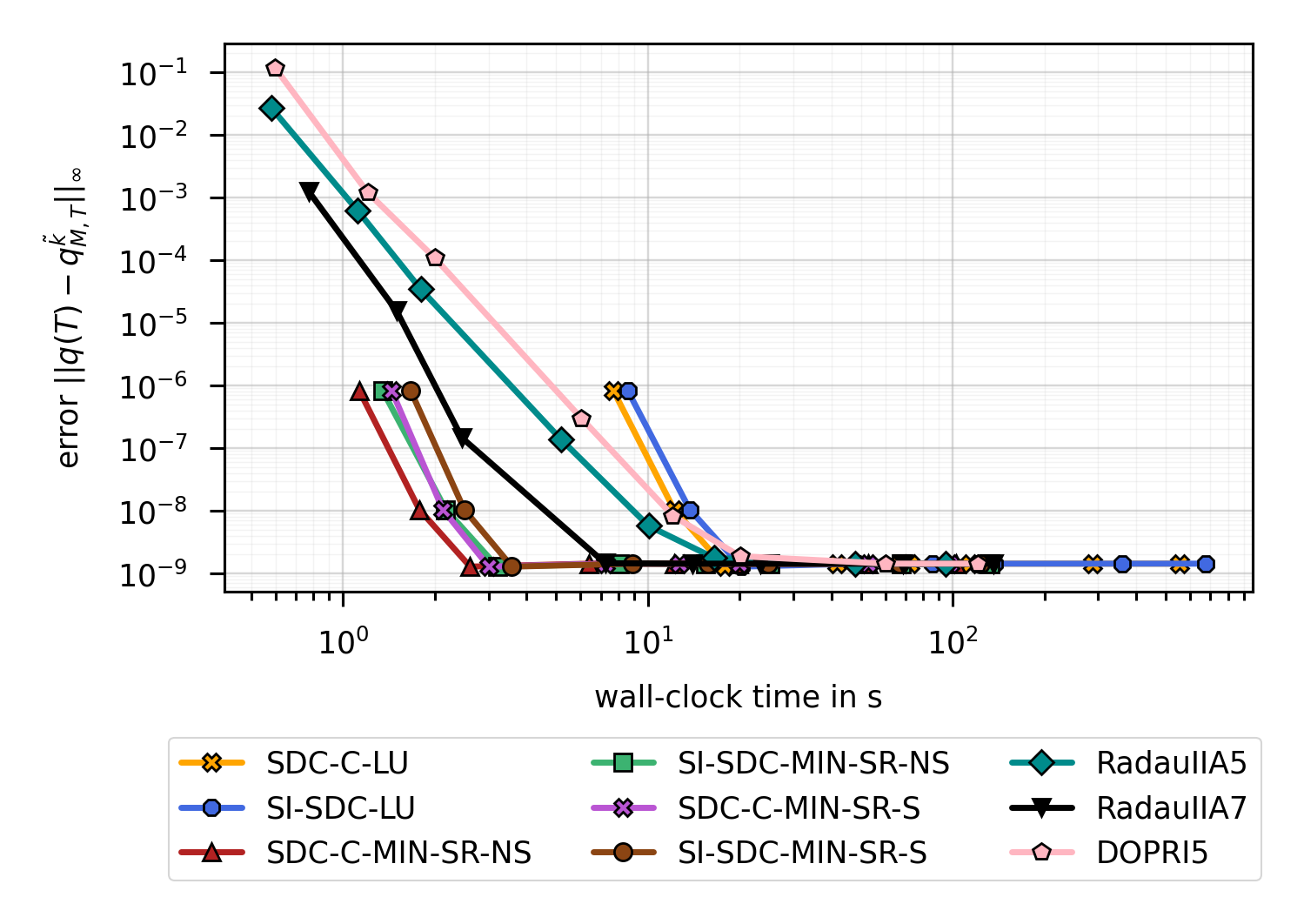}
    \caption{Wall-clock time versus maximum absolute error in $\bm{q}$ at end time $T = 0.03$ of different SDC variants and RK methods for Andrews' squeezer \prettyref{eq:andrews_index_one}. The SDC schemes are based on $M = 6$ using \texttt{LU}, \texttt{MIN-SR-NS}, and \texttt{MIN-SR-S} preconditioning. All methods used the same time step sizes.}
    \label{fig:fig9}
\end{figure}

In \prettyref{fig:fig9}, the comparison of \texttt{SDC-C} and \texttt{SI-SDC} with \texttt{LU}, \texttt{MIN-SR-NS}, and \texttt{MIN-SR-S} preconditioning, the Radau IIA methods, and the \texttt{DOPRI5} method is shown. Again, the parallel SDC methods show a clear advantage in computational runtime over the serial schemes. The "LU-trick" is designed for stiff problems and thus works worse for non-stiff problems resulting in longer runtimes. Compared with the RK methods, the parallel schemes clearly outperform them because the Radau IIA schemes are less efficient for larger dense systems to be solved in each time step. The \texttt{DOPRI5} method also needs more time to compute the systems at each stage via forward substitution, and the benefit of parallelized schemes becomes visible. The \texttt{SDC-C-MIN-SR-NS} method computes a numerical solution with an accuracy of $1.4 \cdot 10^{-9}$ almost ten times faster than \texttt{DOPRI5}, $7.8$ times faster than \texttt{RadauIIA5}, and $3.5$ times faster than \texttt{RadauIIA7}. The \texttt{SI-SDC} method with \texttt{MIN-SR-NS} preconditioning is competitive, as it computes a numerical solution for the same accuracy faster by a factor of $7.9$ than \texttt{DOPRI5}, $6.1$ times faster than \texttt{RadauIIA5}, and $2.7$ times faster than \texttt{RadauIIA7}.

\begin{figure}[!t]
    \centering
    \includegraphics[width=\textwidth]{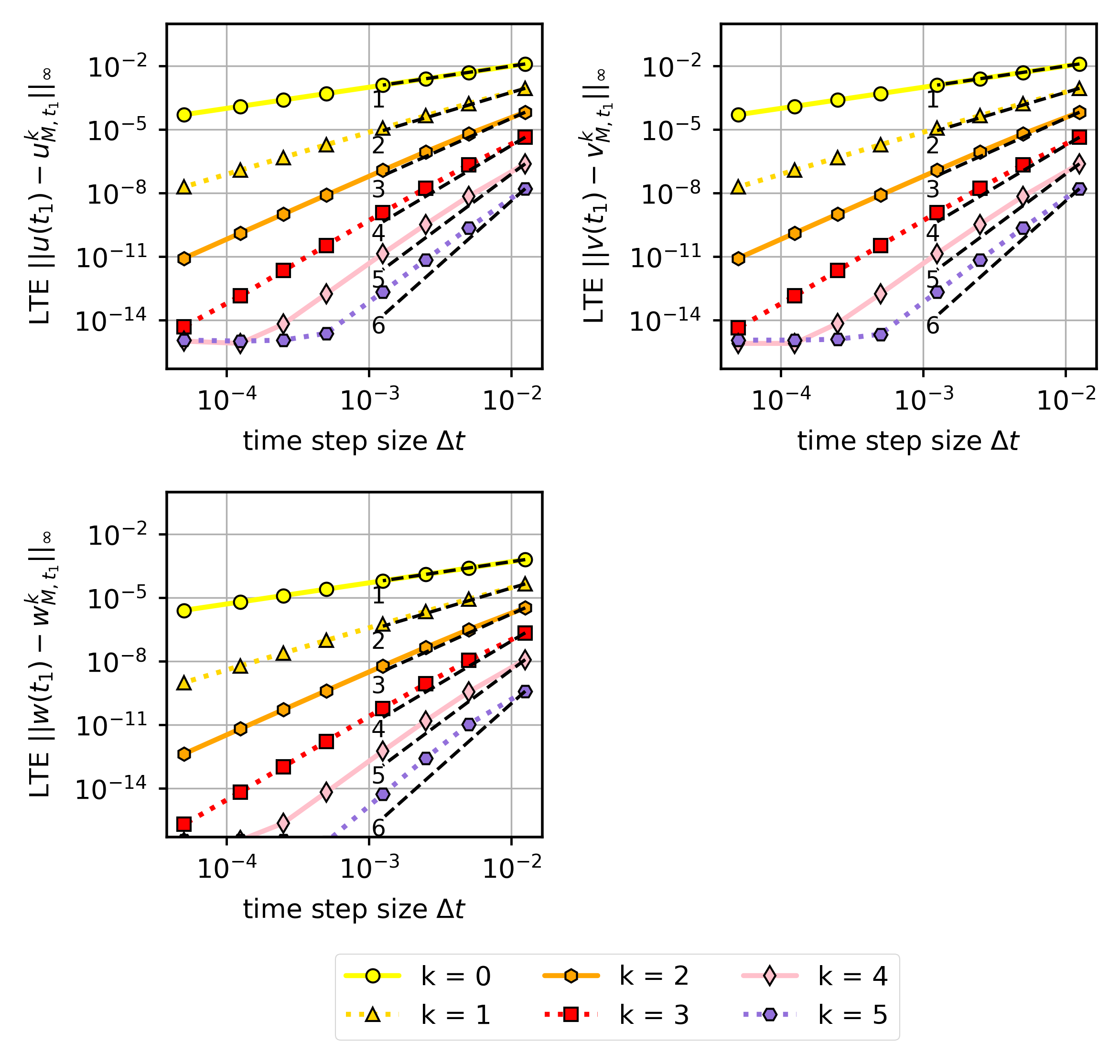}
    \caption{Local truncation error in $\bm{u}$, $\bm{v}$, and $\bm{w}$ against time step sizes $\Delta t$ for each iteration $k = 0,\dots,2M - 1$. Orders of accuracy are shown for \texttt{SDC-C-IE} based on $M = 3$ for the reaction-diffusion problem \prettyref{eq:reacdiff_problem}. Black dashed lines indicate the reference order. Top row: Errors in $\bm{u}$ and $\bm{v}$. Bottom row: Error in $\bm{w}$.}
    \label{fig:fig10}
\end{figure}

\subsection{Reaction-diffusion problem}
We consider the stiff reaction-diffusion problem formulated as a partial DAE (PDAE)
\begin{equation}
    \begin{split} \label{eq:reacdiff_problem}
        \frac{\partial \bm{u}}{\partial t} (\bm{x}, t) &= \frac{\partial^2 \bm{u}}{\partial \bm{x}^2} (\bm{x}, t) + \bm{u} (\bm{x}, t) \frac{\partial \bm{w}}{\partial \bm{x}} (\bm{x}, t) + \bm{\tilde{f}}(\bm{x}, t), \\
        \frac{\partial \bm{v}}{\partial t} (\bm{x}, t) &= \frac{\partial^2 \bm{v}}{\partial \bm{x}^2} (\bm{x}, t) - \bm{v} (\bm{x}, t) \frac{\partial \bm{w}}{\partial \bm{x}} (\bm{x}, t) + \bm{\tilde{g}}(\bm{x}, t), \\
        \bm{0} &= -\bm{u} (\bm{x}, t) - \bm{v} (\bm{x}, t) - \frac{\partial^2 \bm{w}}{\partial \bm{x}^2} (\bm{x}, t)
    \end{split}
\end{equation}
with differential variables $\bm{u}(\bm{x}, t), \bm{v}(\bm{x}, t) \in \mathbb{R}^{n_x}$ and algebraic variable $\bm{w}(\bm{x}, t) \in \mathbb{R}^{n_x}$ being concentrations and source terms $\bm{\tilde{f}}(\bm{x}, t), \allowbreak \bm{\tilde{g}}(\bm{x}, t) \in \mathbb{R}^{n_x}$ in the time interval $[0, 0.25]$. We consider the problem on the spatial grid $x_i = i \Delta x$, $\Delta x = \frac{1}{n_x}$ in $[0, 1]$ for $i = 0,..,n_x - 1$ with $n_x = 256$ degrees of freedom. The source terms are constructed such that the exact solutions of the problem are given by
\begin{equation*}
    \begin{split}
      \bm{u}(\bm{x}, t) &= A \sin(2\pi \bm{x}) \exp(t), \qquad \bm{v}(\bm{x}, t) = B \sin(2\pi \bm{x}) \exp(t), \\
      &\qquad\qquad\bm{w}(\bm{x}, t) = \tfrac{A + B}{4 \pi^2}\,\sin(2\pi \bm{x}) \exp(t)
    \end{split}
\end{equation*}
with $A = B = -1$, i.e.,
\begin{equation*}
    \begin{split}
        \bm{\tilde{f}} (\bm{x}, t) &= \frac{\partial \bm{u}}{\partial t} (\bm{x}, t) - \frac{\partial^2 \bm{u}}{\partial \bm{x}^2} (\bm{x}, t) - \bm{u} (\bm{x}, t) \frac{\partial \bm{w}}{\partial \bm{x}} (\bm{x}, t), \\
        \bm{\tilde{g}} (\bm{x}, t) &= \frac{\partial \bm{v}}{\partial t} (\bm{x}, t) - \frac{\partial^2 \bm{v}}{\partial \bm{x}^2} (\bm{x}, t) + \bm{v} (\bm{x}, t) \frac{\partial \bm{w}}{\partial \bm{x}} (\bm{x}, t).
    \end{split}
\end{equation*}
Periodic boundary conditions
\begin{equation*}
    \bm{u}(0, t) = \bm{u}(1, t), \quad \bm{v}(0, t) = \bm{v}(1, t), \quad \bm{w}(0, t) = \bm{w}(1, t)
\end{equation*}
are chosen. Initial conditions are obtained by evaluating the exact solutions at initial time $t_0 = 0$, i.e., $\bm{u}_0(\bm{x}) = \bm{u}(\bm{x}, 0), \bm{v}_0(\bm{x}) = \bm{v}(\bm{x}, 0)$ and $\bm{w}_0(\bm{x}) = \bm{w}(\bm{x}, 0)$. For a comprehensive analysis to local and global solutions, the reader is referred to \cite{Benabdallah2025}. The computations have been performed in the spectral space using the discrete Fourier transformation and afterwards shifted back to the real space, ensuring high spatial accuracy of the numerical solutions. Newton's method is used to solve the implicit system in spectral space at each node. Here, no space parallelism is used. The tolerance is coupled to the time step size by
\begin{equation*}
    \tolnewton = \frac{\tolref \cdot \Delta t}{\dtref} 
\end{equation*}
with $\tolref = 1.3 \cdot 10^{-12}$ and $\dtref = 2.6 \cdot 10^{-3}$ to balance accuracy and efficiency. For reference time step sizes $\Delta t_{\mathrm{ref},1}, \Delta t_{\mathrm{ref},2} > 0$ with $\Delta t_{\mathrm{ref},1} < \Delta t_{\mathrm{ref},2}$, the corresponding Newton tolerances satisfy $tol_{\mathrm{newton},1} > tol_{\mathrm{newton},2}$ for fixed $\tolref$. Therefore, choosing $\Delta t > \dtref$, a stricter tolerance is imposed to match the higher temporal accuracy, while for $\Delta t < \dtref$ a relaxed tolerance can reduce the computational cost of nonlinear solves. In the former case, the Newton iteration typically terminates after the maximum number of iterations without reaching the prescribed tolerance, but still benefits the overall solution accuracy. We assume reference tolerances $tol_{\mathrm{ref},1}, tol_{\mathrm{ref},2} > 0$ with $tol_{\mathrm{ref},1} < tol_{\mathrm{ref},2}$, we obtain Newton tolerances that satisfy $tol_{\mathrm{newton},1} < tol_{\mathrm{newton},2}$ for fixed $\dtref$. Accordingly, for fixed reference step size $\dtref$ and arbitrary step size $\Delta t$, choosing a relaxed reference tolerance $\tolref > 1.3 \cdot 10^{-12}$ deteriorates the precision by reducing computational costs, while higher accuracy and higher costs are obtained for $\tolref < 1.3 \cdot 10^{-12}$. This strategy with specific chosen $\tolref$ and $\dtref$ above provides an effective compromise between runtime and accuracy. The increment tolerance is set to $\etol = 10^{-12}$.

\begin{figure}[!t]
    \centering
    \includegraphics[width=\textwidth]{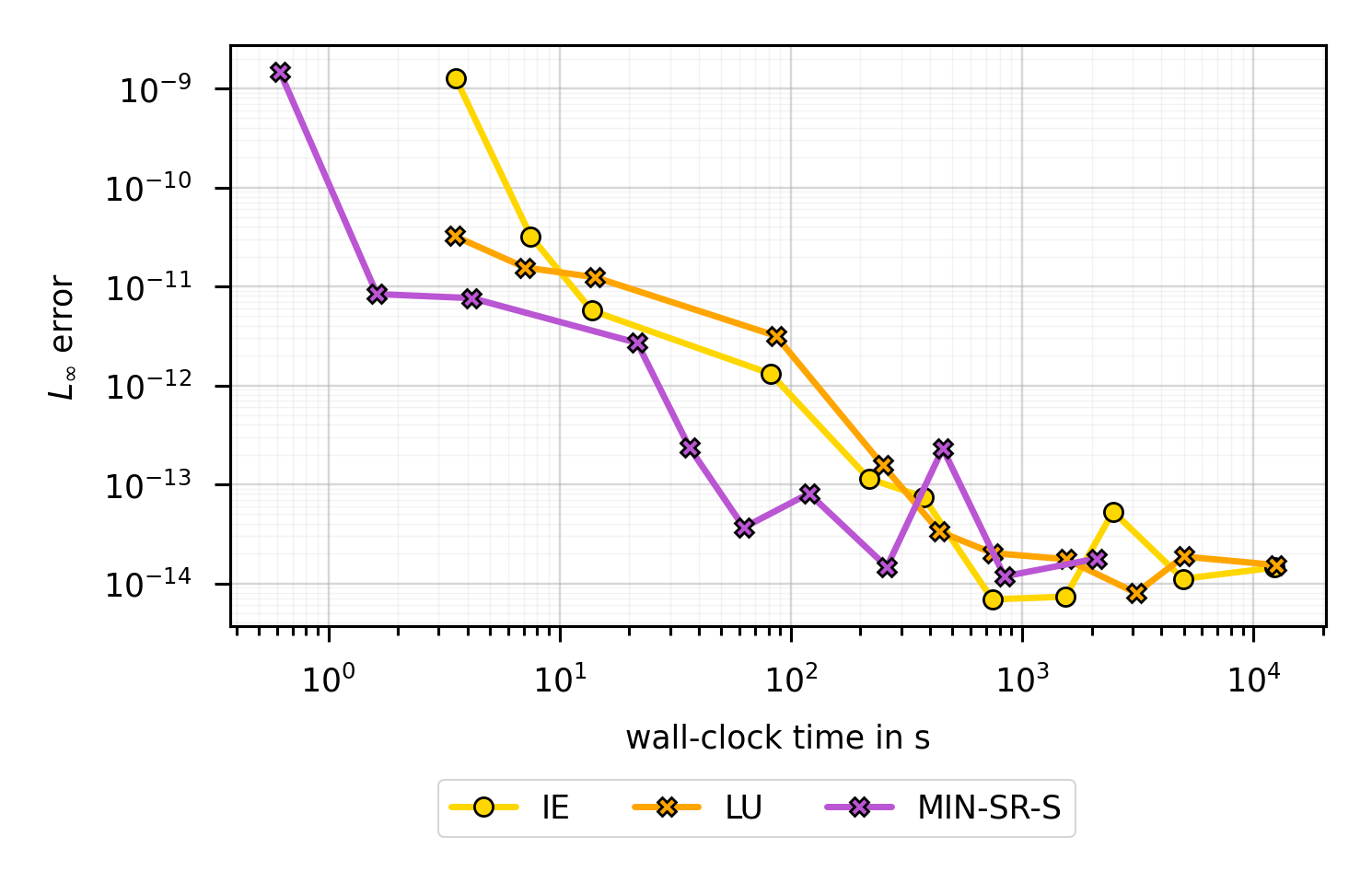}
    \caption{Wall-clock time against $L_\infty$ error of different \texttt{SDC-C} methods for the reaction-diffusion problem \prettyref{eq:reacdiff_problem}. The schemes are based on $M = 6$ Radau IIA nodes using different choices of $\bm{Q}_\Delta$. All methods used the same time step sizes.}
    \label{fig:fig11}
\end{figure}

In \prettyref{fig:fig10}, the order of accuracy for $\bm{u}, \bm{v}$, and $\bm{w}$ after each \texttt{SDC-C-IE} iteration is shown. After $k = 3$ iterations, the expected orders are barely achieved. Although the \texttt{IE} preconditioner is suitable for stiff problems, it is also known for its slow convergence, but this is observed for \texttt{LU} and \texttt{MIN-SR-S} preconditioning as well. In our numerical experiments (not shown here), the observed order in $\bm{w}$ for \texttt{SI-SDC} and \texttt{FI-SDC} after iteration $k$ does not fully match the prediction of \prettyref{thm:lte_sdc_c}. Especially, numerical integration inside the algebraic constraints leads to slower convergence of the algebraic constraints, implying that the convergence rate is reduced at least in $\bm{w}$, see \prettyref{rem:convergence_alg_const}. A comprehensive theoretical analysis could provide further insight into this observation, but is left for future work.

\prettyref{fig:fig11} shows the wall-clock time versus the $L_\infty$ error of \texttt{SDC-C} using the stiff choices of matrices $\bm{Q}_\Delta$. The \texttt{SDC-C} method benefits from the \texttt{MIN-SR-S} coefficients by saving computational time. It computes the solution quite faster than the serial schemes \texttt{SDC-C-IE} and \texttt{SDC-C-LU}. Although \texttt{SDC-C-IE} and \texttt{SDC-C-LU} show similar runtimes, \texttt{SDC-C-IE} yields a higher overall accuracy for moderate time step sizes. Obviously, the parallel \texttt{MIN-SR-S} method can compute a numerical solution using a larger step size than the serial \texttt{IE} and \texttt{LU} schenmes. The non-stiff choices $\bm{Q}_\Delta^{\texttt{EE}}$, $\bm{Q}_\Delta^{\texttt{Picard}}$, and $\bm{Q}_\Delta^{\texttt{MIN-SR-NS}}$ result in diverging methods due to the stiffness of the problem, where the latter matrix yields an implicit scheme.

\begin{figure}[!t]
    \centering
    \includegraphics[width=\textwidth]{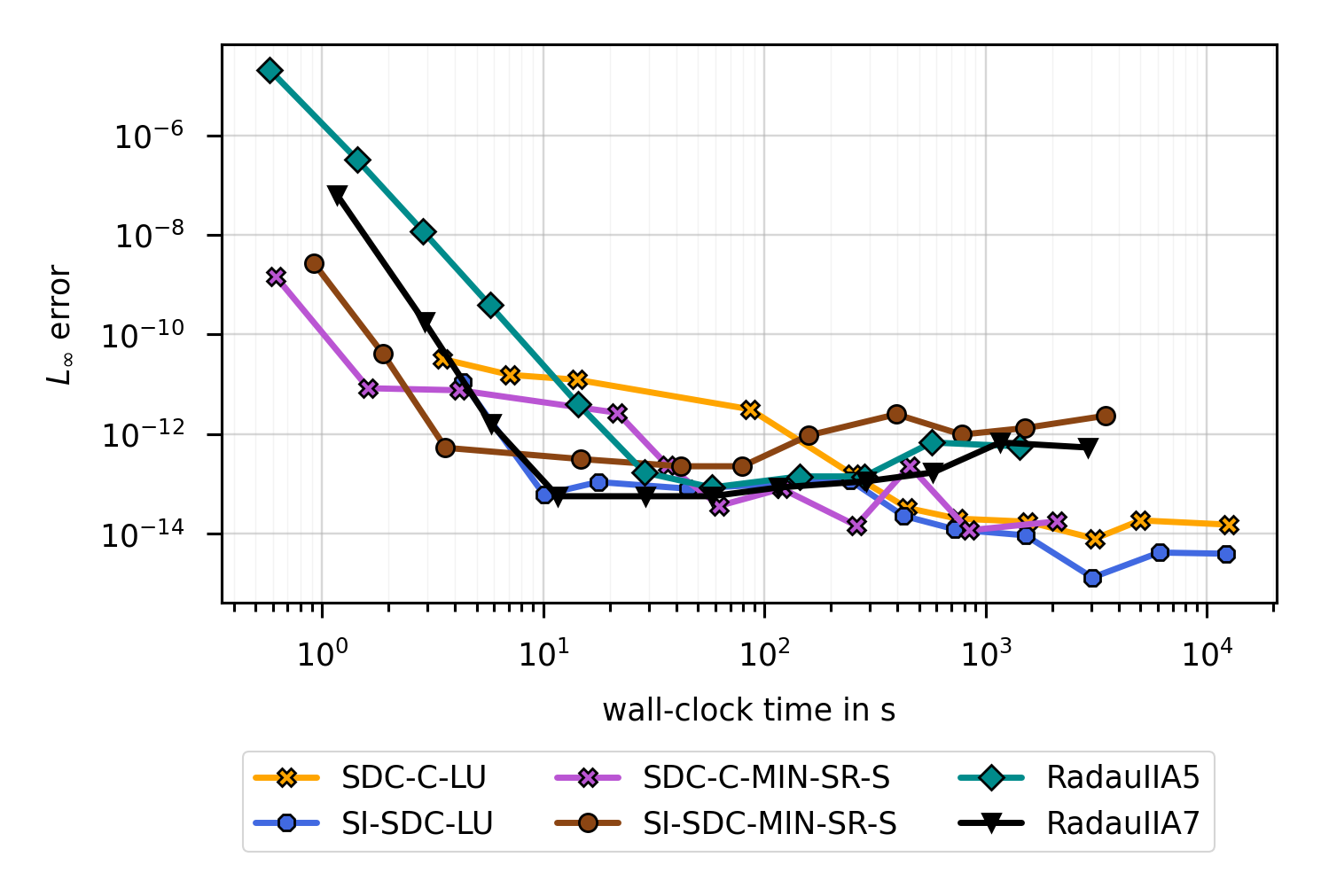}
    \caption{Wall-clock time versus $L_\infty$ error of different SDC variants and Radau methods for the reaction-diffusion problem \prettyref{eq:reacdiff_problem}. The SDC schemes are based on $M = 6$ collocation nodes using \texttt{LU} and \texttt{MIN-SR-S} preconditioning. All methods used the same time step sizes.}
    \label{fig:fig12}
\end{figure}

In \prettyref{fig:fig12}, the wall-clock time versus the $L_\infty$ error of different SDC variants and the Radau IIA methods are shown. For larger $\Delta t$ the \texttt{MIN-SR-S} schemes benefit from the coupled Newton tolerance to the time step size and outperform the Radau IIA method of order $7$ in runtime and accuracy. While the fifth-order Radau IIA method computes as fast as \texttt{SDC-C-MIN-SR-S} and faster than \texttt{SI-SDC-MIN-SR-S}, both SDC variants compute solutions of higher accuracy. The serial \texttt{LU} variants are less favorable here due to their high computational costs. The \texttt{SI-SDC-LU} scheme is much more accurate than the \texttt{SDC-C-LU} scheme although they have similar runtimes. The accuracy can be improved by adjusting the convergence parameters, but it also results in higher computational costs.

\begin{figure}[!t]
    \centering
    \includegraphics[width=\textwidth]{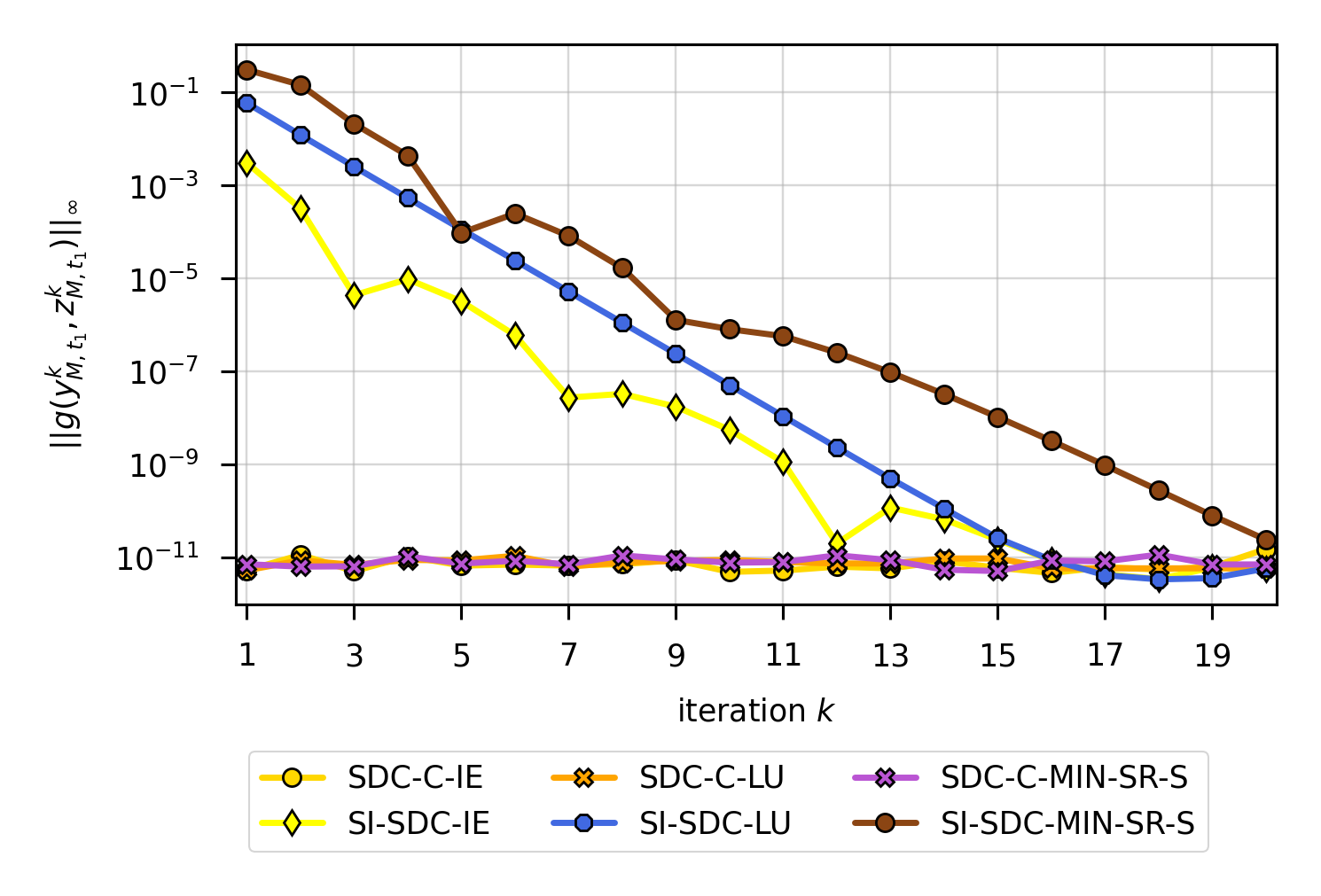}
    \caption{Maximum absolute value of algebraic constraints $\bm{g}$ across iterations of \texttt{SDC-C} and \texttt{SI-SDC} methods for the reaction-diffusion problem \prettyref{eq:reacdiff_problem} in the first time step of size $\Delta t = 0.125$. The schemes are based on $M = 6$ nodes using different $\bm{Q}_\Delta$. Here, we have $\bm{y} = (\bm{u}, \bm{v})$ and $\bm{z} = \bm{w}$.}
    \label{fig:fig13}
\end{figure}

For different \texttt{SDC-C} and \texttt{SI-SDC} schemes, the maximum absolute value of the algebraic constraints after iteration $k$ is shown in \prettyref{fig:fig13}. Although the \texttt{SI-SDC} scheme is suitable for semi-explicit DAEs, it cannot be expected that the algebraic equations are satisfied in each iteration but only in case of convergence, see \prettyref{rem:convergence_alg_const}. In contrast, the numerical solution computed by the \texttt{SDC-C} scheme always satisfies the algebraic constraints due to the construction of the scheme.

\section{Conclusion and outlook} \label{sec:conclusion}
The solution of DAEs requires a tailored solver to efficiently compute a numerical solution with high accuracy. The SDC method is a high-order scheme that iteratively computes a solution by correcting the provisional approximation after each iteration. Under certain assumptions, the numerical solution gains one order per iteration \cite{Shu07}. In \cite{Huang2007}, a SDC method for general IDEs and a semi-integrating variant for semi-explicit DAEs is proposed. The traditional idea of SDC is carried over to the semi-explicit DAE case to derive the \texttt{SDC-C} scheme. It only integrates the differential equations with spectral quadrature and retains the algebraic constraints as an implicit condition in the system. The efficiency of the scheme is studied for various choices of $\bm{Q}_\Delta$ matrices. Especially, we study the scheme for diagonal matrices $\bm{Q}_\Delta$ that allows the scheme to be parallelized across the method.

We have theoretically shown that each \texttt{SDC-C} iteration improves the numerical solution in differential and algebraic variables of index-one problems by one where we modified the proof given by Y. Xia et al. for semi-explicit DAEs. In numerical experiments, this has been confirmed in three test scenarios: The linear test DAE, the nonlinear problem describing Andrews' squeezing mechanism, and the reaction-diffusion problem as a PDAE. The efficiency of the different schemes was evaluated by measuring in runtime needed to compute a solution over a certain time domain. The proposed scheme was compared with other SDC methods and existing RK methods for DAEs that included a half-explicit RK scheme and Radau IIA methods. We demonstrated that the \texttt{SDC-C} scheme is competitive with all the different schemes. The proposed \texttt{SDC-C} method consistently has achieved a higher accuracy than the Radau IIA solvers for identical time step sizes. In addition, for several test cases, the solution computed by \texttt{SDC-C} has achieved high accuracy at a comparable or even reduced computational cost, resulting in shorter runtimes than those of the Radau IIA methods. This highlights the potential of \texttt{SDC-C} as an efficient and accurate alternative to classical implicit Runge–Kutta solvers. The presented scheme has also been shown to be more efficient than other SDC variants, highlighting its favorable balance between accuracy and efficiency.

Currently, the parallel properties of the \texttt{SDC-C} scheme are under investigation.

In this work, the analysis of the \texttt{SDC-C} method is restricted to index-one problems. In order to find out how the method performs for higher index problems, future work will focus on index-two problems. Recent works deal with the application of SDC for the incompressible Navier-Stokes equations using projection schemes \cite{MinionSaye2018}, \cite{Stiller2020}. The problem is of \textit{Hessenberg form} of index two \cite{Ascher1998} given by
\begin{equation}
    \frac{\partial \bm{v}}{\partial t}(\bm{x}, t) = \bm{f}(\bm{v} (\bm{x}, t), \bm{p} (\bm{x}, t), t), \qquad \qquad\bm{0} = \bm{g} (\bm{v}(\bm{x}, t), t),
\end{equation}
with velocity $\bm{v} (\bm{x}, t) \in \mathbb{R}^{n_d}$ as differential variable and pressure $\bm{p} (\bm{x}, t) \in \mathbb{R}^{n_a}$ as algebraic variable. The right-hand side of the momentum equation is denoted by $\bm{f}(\bm{v} (\bm{x}, t), \bm{p} (\bm{x}, t), t) \in \mathbb{R}^{n_d}$, and $\bm{g} (\bm{v}(\bm{x}, t), t) \in \mathbb{R}^{n_a}$ denotes the right-hand side of the continuity equation. One of the methods used in the cited works is algorithmically similar to the \texttt{SDC-C} method. In pressure correction, the momentum equation is solved with an approximation of the pressure and the continuity equation is preserved using a projection step to correct the pressure \cite{Stiller2020}. The proof can be extended to index-two problems by making stronger assumptions to Jacobian matrices of the functions, see the note at the end of \prettyref{sec:sdc_dae_lte}.
\section*{Data Availability}
The data supporting the findings of this study are generated by numerical simulations. 
The code used to reproduce all results is publicly available at
\url{https://github.com/lisawim/pySDC/tree/sdc_dae_analysis_paper}.


%

\section*{Funding}
The computations were carried out on the PLEIADES cluster at the University of Wuppertal, which was supported by the Deutsche Forschungsgemeinschaft (DFG, grant No. INST 218/78-1 FUGG) and the Bundesministerium für Bildung und Forschung (BMBF).

\section*{Declarations}

\section*{Conflict of Interest}
The author declares that there is no conflict of interest.

\section*{Competing Interests}
The author declares no competing interests.

\section*{Ethics Approval}
Not applicable.

\section*{Consent to Participate}
Not applicable.

\section*{Consent for Publication}
Not applicable.

\bibliographystyle{spmpsci}      
\bibliography{references}   


\end{document}